\documentclass[11pt,reqno]{amsart}
\date{Agust 29, 2011}


\usepackage{amsmath,amsfonts,amsthm,amssymb,amsxtra}
\usepackage{amsxtra, amssymb, mathrsfs, pstricks}

\setlength{\voffset}{-.7truein}
\setlength{\textheight}{8.8truein}
\setlength{\textwidth}{6.05truein}
\setlength{\hoffset}{-.7truein}


\newtheorem{theorem}{Theorem}[section]
\newtheorem{proposition}[theorem]{Proposition}

\newtheorem{lemma}[theorem]{Lemma}
\newtheorem{corollary}[theorem]{Corollary}

\theoremstyle{definition}

\newtheorem{definition}[theorem]{Definition}

\theoremstyle{remark}

\newtheorem{remark}[theorem]{\bf Remark}

\numberwithin{equation}{section}


\renewcommand{\epsilon}{\varepsilon}

\newcommand{\N}{\mathbb{N}}

\newcommand{\R}{\mathbb{R}}

\DeclareMathOperator{\gen}{gen}

\DeclareMathOperator{\tr}{tr}


\begin{document}

\title[Heat kernels of metric trees]{Heat kernels of metric trees and applications}

\author{Rupert L. Frank}
\address{Rupert L. Frank, Department of Mathematics,
Princeton University, Washington Road, Princeton, NJ 08544, USA}
\email{rlfrank@math.princeton.edu} 

\author {Hynek Kova\v{r}\'{\i}k}
\address{Hynek Kova\v{r}\'{\i}k, Department of Mathematics,
Politecnico di Torino, Corso Duca degli Abruzzi 24, 10129 Torino, ITALY}
\email{hynek.kovarik@polito.it}

\thanks{ Support through DFG grant FR 2664/1-1 (R.F.) and U.S. NSF
grants PHY-0652854 (R.F.) is gratefully acknowledged. H.K. was partially supported by the MIUR-PRIN08 grant for the project  ''Trasporto ottimo di massa, disuguaglianze geometriche e funzionali e applicazioni''}

\begin{abstract}
We consider the heat semigroup generated by the Laplace operator on metric trees. Among our results we show how the behavior of the associated heat kernel depends on the geometry of the tree. As applications  we establish new eigenvalue estimates for Schr\"odinger operators on metric trees.  
\end{abstract}

\keywords{Heat kernel, metric tree, eigenvalue estimate, Sobolev inequality}

\maketitle


\section{Introduction}

The relation between functional inequalities, like Sobolev inequalities for functions on (smooth or singular) manifolds, and geometric properties of this manifold, like volume growth, has been studied extensively. The crucial link between these two fields is the heat kernel of the Laplace operator on the manifold. Indeed, many functional inequalities have an equivalent form as heat kernel bounds; see, e.g., the book \cite{Da}, the surveys \cite{cou,grig,sc} and the references therein. What is less known, is that there is a close relation between heat kernel bounds and so-called Lieb--Thirring and Cwikel--Lieb--Rozenblum inequalities. The latter are inequalities about eigenvalues of Schr\"odinger operators and are closely related to Sobolev-like inequalities for systems of functions. These inequalities go back to \cite{LT} and their relation to heat kernel bounds are discussed, for instance, in \cite{LS,fls}.

Our goal in this paper is to explore the abstract connection between heat kernel bounds and Lieb--Thirring inequalities in the concrete context of metric graphs. In particular, we shall derive precise bounds on the heat kernel which reflect the geometric properties of the graph. A metric graph is a combinatorial graph where the edges are considered as one-dimensional intervals. The fact that the edges are represented by non-degenerate line segments allows us to introduce various differential operators on metric graphs, such as Laplace and Schr\"odinger operators. Motivated by the fact that these operators on metric graphs appear in a number of models from mathematical physics (where they are often called \emph{quantum graphs}), their analysis has recently attracted a lot of interest; see, e.g., the proceedings \cite{EKKST,BCFK} and the references therein. Various functional inequalities for Laplace operators on metric graphs were recently studied in \cite{ehp,NS1,F,efk1}. The papers \cite{NS2,k,efk,dh,sol} contain inequalities about eigenvalues of Schr\"odinger operators on metric graphs. For a recent study of heat kernels on combinatorial graphs we refer to \cite{klw}.

Most (but not all) of our results are valid for a special class of metric graphs, namely, \emph{sparse, symmetric metric trees}. A metric \emph{tree} is a metric graph on which any two points can be connected by a unique path. We will also suppose that the tree has a {\it root}. By \emph{symmetric} we mean that the length of each edge and the branching number of each vertex depend only on the distance to the root. Finally, under \emph{sparse} we understand an infinite tree with unbounded edge lengths. Later on we will quantify this growth condition and introduce the notion of a global dimension.

We introduce the Neumann Laplacian $-\Delta_\Gamma^N$ on a rooted metric tree as the self-adjoint operator in $L_2(\Gamma)$ which acts as the usual one-dimensional Laplacian on the edges and satisfies the Kirchhoff matching conditions at the vertices and a Neumann boundary condition at the root. (Most of our results extend to the easier case of a Dirichlet condition at the root.) The heat kernel is the integral kernel of the operator $e^{t \Delta_\Gamma^N}$, $t >0$, i.e.,
\begin{equation} \label{int-kernel}
k(x,y,t) := e^{t \Delta_\Gamma^N}(x,y), \qquad x,y \in\Gamma \,,
\end{equation}
and, as we explained, we are going to study its dependence on the time $t$ and the volume growth of the metric tree. The latter is encoded in the so-called branching function 
\begin{equation} \label{g0-first}
g_0(r) = \# \{x\in \Gamma\, :\, |x| =r \}, \quad r \geq 0,
\end{equation}
where $|x|$ denotes the distance between $x$ and the root of $\Gamma$.

The particular feature of this set-up is that $\Gamma$ represents a structure of \emph{mixed dimensionality}. On one hand, it is locally one-dimensional and therefore one should expect a $t^{-1/2}$ singularity of $k(x,x,t)$ for small times. In fact, we show that (Theorem~\ref{1-dim}), up to a multiplicative constant, the power function $t^{-1/2}$ serves as a uniform upper bound on $k(x,y,t)$. This result does not use that $\Gamma$ is a tree.

On the other hand, since the global dimensionality of $\Gamma$ depends on the behavior of $g_0$ at infinity, it is natural to expect that the large time decay of the heat kernel \eqref{int-kernel}, for fixed $x$ and $y$, will be determined by the growth of $g_0$. In order to be able to quantify the decay rate precisely, we assume that $\Gamma$ is a symmetric tree. One of our main results (Theorem~\ref{hk-estimate}) says that, if $g_0$ does not grow too fast, then  
\begin{equation} \label{eq-1}
k(x,x,t) \, \leq\,  \frac{C\, g_0(|x|)}{\sqrt{t}\, \, g_0(|x|+\sqrt{t})}
\end{equation}
for all $x\in\Gamma,\, t\geq 0$ and some constant $C>0$. Moreover, we show that this bound is order-sharp in the decay rate with respect to $t$ for fixed $x\in\Gamma$. Inequality \eqref{eq-1} is in the spirit of the works of Grigor'yan and Saloff-Coste on Riemannian manifolds and our proof relies on their approach. In the special case where the branching function grows like a constant times $r^{d-1}$ for some $d\geq 1$ (i.e., $\Gamma$ has global dimension $d$, see Definition \ref{dim-rec}) we obtain an upper bound on $k(x,x,t)$ by a constant times $g_0(|x|) t^{-d/2}$. We also obtain bounds in certain cases where $g_0$ grows faster than polynomially, e.g., exponentially as in the case of a homogeneous tree.

As an application of these heat kernel bounds we will derive inequalities for the negative eigenvalues of Schr\"odinger operators 
$$
-\Delta_\Gamma^N - V \qquad \text{in} \quad L_2(\Gamma)
$$
with an electric potential $-V$ decaying at infinity. Using our new heat kernel bounds we are able to extend our previous results from \cite{efk} in two ways. First, we are able to remove the assumption that $V$ has to be radially symmetric. Secondly, while in \cite{efk} we treated the different dimensionality by proving a \emph{one-parameter family} of inequalities, we now establish a stronger \emph{single inequality} which takes separately account of the regions where $V$ is large (in this case only the local dimension counts) and where $V$ is small (in this case only the global dimension counts). As a sample of our result we assume again that $\Gamma$ is a symmetric metric tree whose branching function grows like a constant times $r^{d-1}$ for some $d> 1$. Then for all $\gamma\geq 1/2$ one has
\begin{align*}
\tr\left(-\Delta^N_\Gamma -V\right)_-^\gamma \, \leq \, L_d(\beta,\gamma)
& \int_{g_0(|x|)^{\frac2{d-1}}V(x)_+<\beta} V(x)_+^{\gamma+\frac{d}{2}}\, g_0(|x|)\, dx  \\
+ \tilde L_d(\beta, \gamma) & \int_{g_0(|x|)^{\frac2{d-1}}V(x)_+\geq \beta} V(x)_+ \, g_0(|x|)^{\frac{1-2\gamma}{d-1}}\, dx \,,
\end{align*}
where $\beta>0$ is an arbitrary parameter. We obtain similar inequalities also for a certain range of $\gamma<1/2$ and, in particular, for $\gamma=0$, that is, for the number of negative eigenvalues, provided $d>2$. It is remarkable that Lieb--Thirring and Cwikel--Lieb--Rozenblum inequalities hold on metric trees in a form not too different from their Euclidean form despite the fact that the spectrum of the Laplacian $-\Delta^N_\Gamma$ is purely singular \cite{BF}.


\section{Main results}

\subsection{Preliminaries} \label{prelim} 

We denote by $\mathcal{E}$ the set of edges and by $\mathcal{V}$ the set of vertices of a rooted tree graph $\Gamma$. For a vertex $v\in \mathcal{V}$ we define its generation $gen(v)$ as the number of vertices (including the starting point but excluding the end point) which lie on the unique path connecting $v$ with the root $o$.  We call  $\Gamma$  {\it symmetric} if all the vertices at the
same distance from the root have equal branching numbers and all the
edges emanating from these vertices have equal length.

In this case for a vertex of generation $l$ we denote by $b_l$ its (forward) branching number and by $r_l$ its distance to the root. We also set $b_0=1$ and $r_0=0$, and assume that $b_l \geq 2$ for any  $l \geq 1$. We emphasize that this assumption implies that $\Gamma$ has no other leaves than $o$.
We introduce the branching function
\begin{align} \label{g0}
g_0(r) := b_0\, b_1\cdots b_l \qquad \text{if} \quad r_l < r \leq
r_{l+1}, \quad l\in\N_0.
\end{align}
Throughout we assume that 
$$
 \sup_{x\in\Gamma} |x| = \infty, \qquad |x|:= \text{dist}(x,o). 
$$

\noindent The Neumann Laplacian $-\Delta_\Gamma^N$ is defined as the non-negative self-adjoint operator in $L_2(\Gamma)$ associated with the closed quadratic form
\begin{align}\label{eq:kinetic}
  \int_{\Gamma} |\varphi'(x)|^2 \, dx, \quad \varphi \in H^1(\Gamma).
\end{align}
Here $H^1(\Gamma)$ consists of all continuous
functions $\varphi$ such that $\varphi \in H^1(e)$ on each edge $e\in\mathcal{E}$ and
\begin{align*}
\int_{\Gamma} \left(|\varphi'(x)|^2 + |\varphi(x)|^2\right) \, dx <
\infty.
\end{align*}
From \eqref{eq:kinetic} and  the Beurling-Deny  theorem (see, e.g., \cite[Sec. XIII.12]{ReSi4}) it follows that for any $t>0$, the operator $e^{t \Delta_\Gamma^N}$ is a positivity preserving contraction on $L_p(\Gamma)$ for every $1\leq p\leq \infty$. We denote the integral kernel of this operator by $k(\cdot,\cdot,t)$, that is,
$$
\left( e^{t \Delta_\Gamma^N} f\right)(x) = \int_\Gamma k(x,y,t) f(y) \,dy \,.
$$
 
\smallskip

\noindent A special class of symmetric trees are those for which the branching function $g_0$ has a power like growth at infinity. For such trees, following \cite{k},  we define their global dimension:

\begin{definition} \label{dim-rec}
A symmetric tree $\Gamma$ has global dimension $d\geq 1$ if 
\begin{align}\label{eq:dim}
0 <  \  \inf_{r\geq 0} \frac{g_0(r)}{(1+r)^{d-1}} \ \leq \ 
\sup_{r\geq0} \frac{g_0(r)}{(1+r)^{d-1}}  \ <\infty\, .
\end{align}
\end{definition}

\smallskip


\subsection{Main results}

 Our first result gives an upper bound on the heat kernel not only for trees, but for a very general class of graphs. 

\begin{theorem} \label{1-dim}
 Let $\Gamma$ be a connected graph of infinite volume. Then for all $x\in\Gamma$ and all $t>0$ it holds
$$
k(x,x,t) \leq (\pi\, t)^{-1/2} \,.
$$
\end{theorem}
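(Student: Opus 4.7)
The strategy is to reduce the theorem to a one-dimensional Sobolev-type inequality on $\Gamma$ that reflects the local one-dimensional structure of the graph, and then to convert this into the desired on-diagonal heat kernel bound via the semigroup identity $k(x,x,2s) = \int_\Gamma k(x,y,s)^2\,dy$ (which follows from self-adjointness and the semigroup property). The hypothesis that $\Gamma$ has infinite volume enters in exactly one place: it guarantees that every point of $\Gamma$ is the initial point of an infinite geodesic ray.

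The first step is to prove that for every $f \in H^1(\Gamma)$,
\[
\|f\|_{L^\infty(\Gamma)}^2 \leq 2\, \|f\|_{L^2(\Gamma)}\, \|f'\|_{L^2(\Gamma)}.
\]
Fix $y \in \Gamma$ and, using the connectedness of $\Gamma$ together with $\sup_{z \in \Gamma}|z|=\infty$, choose a geodesic ray $\gamma \colon [0,\infty) \to \Gamma$ parametrized by arc length with $\gamma(0)=y$. Since elements of $H^1(\Gamma)$ are continuous, the restriction $f \circ \gamma$ belongs to $H^1([0,\infty))$ and hence tends to zero at infinity. The fundamental theorem of calculus along $\gamma$ yields
\[
|f(y)|^2 = -2 \int_0^\infty \operatorname{Re}\bigl( \overline{f(\gamma(s))}\, f'(\gamma(s)) \bigr)\, ds \leq 2\,\|f\|_{L^2(\Gamma)}\,\|f'\|_{L^2(\Gamma)},
\]
and taking the supremum over $y$ gives the claim.

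The second step applies this Sobolev inequality to $\phi := k(x,\cdot,s) \in H^1(\Gamma)$. Combined with the interpolation $\|\phi\|_2^2 \leq \|\phi\|_1 \|\phi\|_\infty$ and the $L^1$-contractivity $\|\phi\|_1 \leq 1$, it produces a one-dimensional Nash-type inequality $\|\phi\|_2^6 \leq 4\|\phi'\|_2^2$. Substituting $\|\phi\|_2^2 = k(x,x,2s)$ and $\|\phi'\|_2^2 = -\tfrac12\tfrac{d}{ds}\|\phi\|_2^2$ converts this into a first-order ordinary differential inequality for $u(t) := k(x,x,t)$ of the form $u(t)^3 \leq -4\,u'(t)$, which integrates---using the divergence $u(0^+) = +\infty$---to a bound $u(t) \leq C\, t^{-1/2}$ with an explicit constant.

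The main obstacle is sharpening the constant to exactly $\pi^{-1/2}$. The naive chain above produces the weaker bound $u(t) \leq \sqrt{2/t}$, which is off by a factor of $\sqrt{2\pi}$. To obtain the stated sharp constant one must either replace the Sobolev + H\"older step by the sharp one-dimensional Nash-type inequality (whose extremal is built from the half-line heat kernel, after the manner of Carlen--Loss), or compare directly with the Neumann half-line: its Dirichlet form is the extremal case for the above scheme, and its on-diagonal kernel at the boundary point equals exactly $(\pi t)^{-1/2}$, giving both the identification of the extremal configuration and the sharp constant.
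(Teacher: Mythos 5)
Your proposal is genuinely different from the paper's proof, but as written it does not prove the stated theorem. The paper proceeds via the logarithmic Sobolev inequality of Proposition \ref{log}, whose proof uses one-dimensional rearrangement: for $u$ vanishing at infinity one passes to $u^*$ on $[0,\infty)$, invokes the P\'olya--Szeg\H{o}-type inequality $\int_\Gamma|u'|^2\,dx \geq \int_0^\infty|(u^*)'|^2\,dr$ (following \cite{F}, using connectedness so that the level sets of a non-constant function have at least one boundary point, and infinite volume to ensure $u$ vanishes at infinity), and then invokes the sharp Euclidean log-Sobolev inequality for symmetric functions. The Carlen--Loss argument \cite[Thm.~8.18]{LL} then converts the log-Sobolev inequality into the $L_1\to L_\infty$ bound $(\pi t)^{-1/2}$, with the sharp constant built in. Your route --- Gagliardo--Nirenberg $\|f\|_\infty^2\leq 2\|f\|_2\|f'\|_2$, H\"older, Nash's ODE argument --- is a reasonable alternative for getting \emph{a} bound of the form $Ct^{-1/2}$, but it produces $C=\sqrt2$ rather than $\pi^{-1/2}$, as you yourself compute, and the theorem claims the sharp constant.

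There are two concrete gaps. First, infinite volume together with connectedness does \emph{not} guarantee that every point is the initial point of an infinite geodesic ray; a star graph with infinitely many unit edges is a counterexample. What you actually need (and what the rearrangement proof uses implicitly) is that $H^1$-functions vanish at infinity, i.e.\ for every $\varepsilon>0$ there exists a point $z$ with $|f(z)|<\varepsilon$ reachable from $y$ by a simple path of finite length; one then integrates $\frac{d}{ds}|f|^2$ along that path and lets $\varepsilon\to0$. This repairs the first step. Second, and more seriously, the final paragraph does not close the gap in the constant. Nash's iteration scheme is known not to yield the sharp ultracontractivity constant even when fed the sharp Nash inequality, so "replace by the sharp one-dimensional Nash inequality" will not, on its own, produce $\pi^{-1/2}$; and "compare directly with the Neumann half-line" is precisely the rearrangement step that the paper carries out but that your proof does not contain. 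You would need to actually establish the rearrangement inequality (or some substitute) to transfer the half-line optimality to $\Gamma$, and then use the Carlen--Loss log-Sobolev mechanism rather than Nash's to retain the sharp constant. As it stands the argument proves $k(x,x,t)\leq\sqrt{2/t}$, which is a weaker statement than the theorem.
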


The constant in this estimate is best possible, as can be easily seen from the example $\Gamma=\R_+$ where $k(x,x,t)=(4\pi t)^{-1/2} (1+ e^{-x^2/t})$.

Theorem \ref{1-dim} gives an upper bound on $k(x,x,t)$  which is uniform with respect to $x$. However, as explained above, for a fixed $x\in\Gamma$ the large time decay of $k(x,x,t)$ should depend on the growth of $g_0$. This phenomenon is reflected in our next result.

\begin{theorem} \label{hk-estimate}
Let $\Gamma$ be a symmetric tree. Assume that the branching function $g_0$ of $\Gamma$ satisfies  
\begin{equation} \label{xdouble}
g_0(2r) \, \leq\, C_0\, g_0(r) \qquad \text{for all}\ r \in[0,\infty)
\end{equation}
for some constant $C_0$. Then there is a constant $c>0$ such that for all $x\in\Gamma$ and all $t>0$ it holds
\begin{equation} \label{eq:two-sided}
\frac{1}{c\, \sqrt{t}\, \, g_0(|x|+\sqrt{t})}\  \leq\  k(x,x,t) \ \leq \ \frac{c\, g_0(|x|)}{\sqrt{t}\, \, g_0(|x|+\sqrt{t})}\, .
\end{equation}
\end{theorem}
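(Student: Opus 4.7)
The plan is to pass from $\Gamma$ to a one-dimensional \emph{weighted} heat-kernel problem by exploiting the symmetry of the tree and then to invoke the Grigor'yan--Saloff-Coste machinery in the reduced setting. Since the tree is symmetric, the subspace of radial functions $\varphi(x)=\tilde\varphi(|x|)$ is invariant under $e^{t\Delta^N_\Gamma}$, and the map $\varphi\mapsto\tilde\varphi$ is an isometry onto $L_2((0,\infty),g_0(r)\,dr)$. Under this isometry the generator becomes $\tilde H=-g_0^{-1}\partial_r(g_0\partial_r)$ with Neumann boundary condition at $r=0$; denote by $\tilde K(r,s,t)$ its heat kernel with respect to $g_0(s)\,ds$. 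The coarea-type identity $\int_\Gamma f(|y|)\,dy=\int_0^\infty f(s)\,g_0(s)\,ds$, applied to the defining equation of $\tilde K$ on radial test functions, yields
\begin{equation*}
\sum_{|y|=s} k(x,y,t) \;=\; g_0(s)\,\tilde K(|x|,s,t), \qquad s>0,\ x\in\Gamma.
\end{equation*}

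First I would use this identity to sandwich the diagonal. Positivity of $k$ gives the upper bound at once:
$$k(x,x,t)\le\sum_{|y|=|x|}k(x,y,t)=g_0(|x|)\,\tilde K(|x|,|x|,t),$$
while for the lower direction the semigroup Cauchy--Schwarz inequality $k(x,y,t)\le\sqrt{k(x,x,t)k(y,y,t)}$, combined with the symmetry-driven identity $k(y,y,t)=k(x,x,t)$ for $|y|=|x|$ (choose an automorphism of $\Gamma$ sending $x$ to $y$), shows $k(x,y,t)\le k(x,x,t)$ on the entire shell, so averaging gives $\tilde K(|x|,|x|,t)\le k(x,x,t)$. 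Hence both inequalities in \eqref{eq:two-sided} reduce to the two-sided diagonal estimate $\tilde K(r,r,t)\asymp 1/(\sqrt{t}\,g_0(r+\sqrt{t}))$.

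The latter I would obtain from the Grigor'yan--Saloff-Coste theorem, which characterises two-sided Gaussian heat-kernel estimates by the combination of volume doubling and a scale-invariant $L_2$-Poincar\'e inequality for the underlying Dirichlet form. Set $V(r,\rho):=\int_{\max(0,r-\rho)}^{r+\rho}g_0(s)\,ds$. Volume doubling $V(r,2\rho)\le C\,V(r,\rho)$ is a short computation using the monotonicity of $g_0$ (implied by $b_l\ge 2$) together with the pointwise hypothesis \eqref{xdouble}. The Poincar\'e inequality
$$\int_{B(r_0,\rho)}|u-u_B|^2\,g_0\,dr\;\le\;C\,\rho^2\int_{B(r_0,\rho)}|u'|^2\,g_0\,dr,$$
with a constant uniform in $r_0$ and $\rho$, is available from one-dimensional doubling-weight theory. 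The Grigor'yan--Saloff-Coste theorem then delivers $\tilde K(r,r,t)\asymp 1/V(r,\sqrt{t})$, and doubling combined with monotonicity of $g_0$ yields $V(r,\sqrt{t})\asymp\sqrt{t}\,g_0(r+\sqrt{t})$, closing the argument.

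The main obstacle I anticipate is verifying the scale-invariant Poincar\'e inequality for $\tilde H$ uniformly over all balls, in particular those abutting the root $r=0$ (where the Neumann condition intervenes) or spanning many generations of the tree (across which $g_0$ jumps). The doubling hypothesis \eqref{xdouble} is precisely the quantitative tool needed to push these cases through. A minor technicality is that $g_0$ is piecewise constant with jumps, so the Grigor'yan--Saloff-Coste theorem must be invoked in its Dirichlet-form (strongly local, regular) formulation rather than the smoother Riemannian one, but this version is by now standard.
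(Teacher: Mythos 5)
Your proposal is correct and arrives at the same two-sided estimate, but the reduction to the one-dimensional weighted problem is done by a genuinely different and somewhat more streamlined route than the paper's. The paper establishes the sandwich
\[
k_0(|x|,|x|,t)\ \le\ k(x,x,t)\ \le\ g_0(|x|)\,k_0(|x|,|x|,t)
\]
(with $k_0=\tilde K$ the kernel of the radial operator $A_0$) via the full partial-wave decomposition: it writes $k(x,x,t)=k_0(|x|,|x|,t)+\sum_{l\geq 1}\frac{b_l-1}{b_l}k_l(|x|,|x|,t)$ and then proves the comparison lemma $k_l\le b_0\cdots b_l\,k_0$ for every mode $l\geq1$ by a Trotter-product and monotone-convergence argument (Lemma 4.4 in the paper), before summing the telescoping products. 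You instead work entirely with the radial projection identity $\sum_{|y|=s}k(x,y,t)=g_0(s)\tilde K(|x|,s,t)$: positivity of the kernel gives the upper bound in one line, and the semigroup Cauchy--Schwarz inequality together with the root-fixing automorphisms (which act transitively on shells for a symmetric tree) gives the lower bound by averaging over the shell. This avoids the Trotter comparison and the nontrivial telescoping sum altogether, and it yields precisely the same intermediate result as the paper's Theorem 4.6 plus the trivial lower bound from equation (4.9). The second half --- volume doubling, the scale-invariant Poincar\'e inequality for the weighted manifold $([0,\infty),g_0\,dr)$, Grigor'yan--Saloff-Coste, and the estimate $V(r,\sqrt t)\asymp\sqrt t\,g_0(r+\sqrt t)$ --- is identical in substance to the paper's Section 5.1. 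One small remark: the paper proves the Poincar\'e inequality directly from the Opic--Kufner Hardy characterization using only the monotonicity of $g_0$ (no doubling needed there); your appeal to general doubling-weight theory is a valid but slightly heavier way to get the same constant.
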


One can obtain less precise bounds than \eqref{eq:two-sided} by using the monotonicity of $g_0$. Bounding $g_0(|x|)\leq g_0(|x|+\sqrt t)$ we see that \eqref{eq:two-sided} yields the one-dimensional estimate $k(x,x,t) \leq c\, t^{-1/2}$ from Theorem \ref{1-dim}. However, Theorem \ref{1-dim} is valid for a larger class of graphs and gives an explicit (and sharp) value of the constant $c$.

A different use of the monotonicity, namely $g_0(|x|+\sqrt{t})^{-1} \leq g_0(\sqrt{t})^{-1}$, yields the bound
$$
k(x,x,t) \ \leq \ \frac{c\, g_0(|x|)}{\sqrt{t}\, \, g_0(\sqrt{t})}\, .
$$
One can control the term $g_0(\sqrt t)$ if one assumes that $\Gamma$ has a global dimension in the sense of Definition \ref{dim-rec}. Noting that this assumption implies condition \eqref{xdouble}, we obtain

\begin{corollary} \label{finite-dim}
Let $\Gamma$ be a symmetric tree with global dimension $d$. Then
for some $C>0$, any $x\in\Gamma$ and any $t>0$ it holds
\begin{equation} \label{d:dim}
k(x,x,t) \, \leq \, C \ t^{-d/2}\,  g_0(|x|).
\end{equation}
\end{corollary}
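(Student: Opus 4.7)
The plan is to apply Theorem~\ref{hk-estimate} after verifying its doubling hypothesis, and then to estimate the denominator using the global dimension property.

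\emph{Step 1: Doubling from global dimension.} Set $A := \inf_{r\geq 0} g_0(r)/(1+r)^{d-1}$ and $B := \sup_{r\geq 0} g_0(r)/(1+r)^{d-1}$, which are finite and strictly positive by \eqref{eq:dim}. Then for any $r\geq 0$,
$$
 \frac{g_0(2r)}{g_0(r)} \;\leq\; \frac{B(1+2r)^{d-1}}{A(1+r)^{d-1}} \;\leq\; \frac{B}{A}\,2^{d-1},
$$
so \eqref{xdouble} holds with $C_0 := 2^{d-1}B/A$.

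\emph{Step 2: Apply the upper bound of Theorem~\ref{hk-estimate} and simplify.} Using the monotonicity of $g_0$ (which follows from $b_l \geq 1$) we have $g_0(|x|+\sqrt t) \geq g_0(\sqrt t) \geq A(1+\sqrt t)^{d-1}$, hence
$$
k(x,x,t) \;\leq\; \frac{c\,g_0(|x|)}{\sqrt t\,g_0(|x|+\sqrt t)} \;\leq\; \frac{c}{A}\cdot\frac{g_0(|x|)}{\sqrt t\,(1+\sqrt t)^{d-1}}.
$$

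\emph{Step 3: Two regimes in $t$.} For $t\geq 1$ we have $(1+\sqrt t)^{d-1}\geq t^{(d-1)/2}$, so the right-hand side is bounded by $(c/A)g_0(|x|)t^{-d/2}$. For $0<t\leq 1$ we simply use $(1+\sqrt t)^{d-1}\geq 1$, giving the bound $(c/A)g_0(|x|)t^{-1/2}$; since $d\geq 1$ and $t\leq 1$, we have $t^{-1/2}\leq t^{-d/2}$, so the same form $(c/A)g_0(|x|)t^{-d/2}$ dominates. Combining both cases yields \eqref{d:dim} with $C := c/A$.

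The argument contains no real obstacle: once Theorem~\ref{hk-estimate} is available, the corollary reduces to checking that the global dimension assumption implies doubling and then bounding $g_0(\sqrt t)$ from below by a power of $\sqrt t$. The only minor care is the small-$t$ regime, where $(1+\sqrt t)^{d-1}$ does not yet give the full $t^{(d-1)/2}$ factor; this is handled trivially by exploiting $d\geq 1$.
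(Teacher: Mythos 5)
Your proof is correct and follows the same route the paper takes: verify that the global dimension assumption implies the doubling condition \eqref{xdouble}, apply the upper bound from Theorem~\ref{hk-estimate} together with the monotonicity bound $g_0(|x|+\sqrt t)\geq g_0(\sqrt t)$, and then estimate $g_0(\sqrt t)$ from below using \eqref{eq:dim}. Your explicit split between $t\leq 1$ and $t\geq 1$ simply spells out the elementary arithmetic that the paper leaves implicit.
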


In view of the lower bound in \eqref{eq:two-sided} the decay rate $t^{-d/2}$ for fixed $x\in\Gamma$ in the above upper bound is best possible.

To summarize our analysis so far, we have seen that the behavior of the heat kernel for small and large times is determined by the local and global dimension of $\Gamma$, respectively. Indeed, for a tree of global dimension $d$ Theorem \ref{hk-estimate} gives
\begin{equation} \label{local-global}
k(x,x,t) \simeq t^{-1/2} \quad \text{as\ } t\to 0, \qquad k(x,x,t) \simeq t^{-d/2} \quad \text{as\ } t\to\infty.
\end{equation}

\smallskip 

 There are of course symmetric trees which do not satisfy condition \eqref{xdouble}. This happens typically when the branching function $g_0$ grows too fast. To cover such cases we have 

\begin{theorem} \label{no-vd}
Let $\Gamma$ be a symmetric tree and assume that 
\begin{equation}\label{eq:sgamma}
S_\Gamma^{-1}(\delta) := \sup_{r>0} \left( \int_0^r g_0(s)\,ds
\right)^{(\delta-2)/\delta} \left( \int_r^\infty \frac{ds}{g_0(s)} \right)  < \infty
\end{equation}
for some $\delta >2$. Then for any $x\in\Gamma$ and $t>0$ it holds
\begin{equation} \label{eq:no-vd}
k(x,x,t)   \, \leq \, \Big(\frac
\delta{2 \tilde S_\Gamma(\delta)}\Big)^{\delta/2}\  t^{-\delta/2}\, g_0(|x|),
\end{equation}
where
\begin{equation}
\label{eq:sgammatilde}
\tilde S_\Gamma(\delta)  := \left( \frac{(\delta-2)^{\delta-2} \delta^\delta}{(2(\delta-1))^{2(\delta-1)}} \right)^{1/\delta} S_\Gamma(\delta).
\end{equation}
\end{theorem}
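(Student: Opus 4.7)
The plan is to exploit the symmetry of $\Gamma$ to reduce the bound to a one-dimensional weighted heat kernel estimate, to extract from \eqref{eq:sgamma} a weighted Sobolev inequality, and to convert the latter into an ultracontractive bound via Nash's method. For the reduction, the map $u\mapsto u(|\cdot|)$ is a unitary equivalence between $L_2((0,\infty), g_0\, dr)$ and the radial subspace of $L_2(\Gamma)$, conjugating the radial part of $-\Delta_\Gamma^N$ with the weighted Sturm--Liouville operator $H u = -g_0^{-1}(g_0 u')'$ equipped with a Neumann condition at $r=0$. Writing $p_H(r,s,t)$ for the heat kernel of $H$ with respect to $g_0\, dr$, applying $e^{t\Delta_\Gamma^N}$ to a radial test function $F(x)=u(|x|)$ and comparing with $e^{-tH}u$ yields the identity
$$\sum_{y\in\Gamma:\,|y|=s} k(x,y,t) \ = \ g_0(s)\, p_H(|x|,s,t).$$
By positivity of $k$, setting $s=|x|$ and retaining only the diagonal term gives $k(x,x,t)\le g_0(|x|)\,p_H(|x|,|x|,t)$, so it suffices to prove $p_H(r,r,t)\le (\delta/(2\tilde S_\Gamma(\delta)\,t))^{\delta/2}$.

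Next, the supremum in \eqref{eq:sgamma} is (the square of) the classical Muckenhoupt constant for the weighted Hardy inequality on $(0,\infty)$ with parameters $p=2$, $q=2\delta/(\delta-2)$, and weight $g_0$ on both sides. A Muckenhoupt--Bliss (or Maz'ya) theorem then delivers the weighted Sobolev inequality
$$\tilde S_\Gamma(\delta)\, \Bigl(\int_0^\infty |u|^{\frac{2\delta}{\delta-2}}\, g_0\, dr\Bigr)^{\frac{\delta-2}{\delta}} \, \leq \, \int_0^\infty |u'|^2\, g_0\, dr,$$
initially for $u\in C_c^\infty((0,\infty))$ and then by density on the form domain of $H$. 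The explicit factor $\tilde S_\Gamma(\delta)$ defined in \eqref{eq:sgammatilde} arises from the sharp numerical ratio between the Muckenhoupt supremum and the best Hardy constant for this specific exponent pair.

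H\"older interpolation between $L_1(g_0\, dr)$ and $L_{2\delta/(\delta-2)}(g_0\, dr)$ turns the Sobolev inequality into the Nash inequality $\|u\|_2^{2+4/\delta} \leq \tilde S_\Gamma(\delta)^{-1}\|u'\|_2^2\|u\|_1^{4/\delta}$ in weighted norms. For $u_0\ge 0$ and $v(t)=e^{-tH}u_0$ the $L_1$-contractivity from the Beurling--Deny theorem gives $\|v(t)\|_1\le\|u_0\|_1$, and substituting Nash into $\tfrac{d}{dt}\|v\|_2^2 = -2\|v'\|_2^2$ yields a differential inequality whose integration produces $\|e^{-tH}\|_{L_1\to L_2}^2 \leq (\delta/(4\tilde S_\Gamma(\delta)\,t))^{\delta/2}$. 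Duality and semigroup composition at time $t/2$ then give $p_H(r,s,t) \leq (\delta/(2\tilde S_\Gamma(\delta)\,t))^{\delta/2}$, which combined with the first paragraph finishes \eqref{eq:no-vd}.

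The main obstacle is to isolate the precise constant $\tilde S_\Gamma(\delta)$ of \eqref{eq:sgammatilde}: the abstract Muckenhoupt test merely gives the Sobolev inequality with some constant comparable to $S_\Gamma(\delta)$, and extracting the closed-form factor $((\delta-2)^{\delta-2}\delta^\delta/(2(\delta-1))^{2(\delta-1)})^{1/\delta}$ requires a delicate sharp-constant computation, effectively a weighted Bliss-type theorem for the endpoint $p=2$, $q=2\delta/(\delta-2)$.
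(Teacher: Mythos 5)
Your proposal is essentially correct and its middle and final steps (H\"older interpolation from a weighted Hardy--Sobolev inequality to a Nash inequality, then the classical Nash iteration with $L_1$-contractivity, duality and semigroup splitting) coincide with the paper's Propositions \ref{nashtrans} and \ref{A0}. Two comments on where you diverge or leave a seam.

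Your reduction to the one-dimensional half-line operator is genuinely different from, and cleaner than, the paper's. The paper obtains $k(x,x,t)\le g_0(|x|)\,k_0(|x|,|x|,t)$ (Theorem \ref{main-estim}) by going through the full partial-wave decomposition \eqref{eq:heatdiag} and the comparison Lemma \ref{comp} for the higher modes $k_l$. You instead observe that the semigroup preserves radial functions, deduce directly the identity $\sum_{|y|=s}k(x,y,t)=g_0(s)\,p_H(|x|,s,t)$, and then invoke positivity of $k$ to drop the off-diagonal terms at $s=|x|$. This reproduces the same inequality without needing the $l\ge1$ machinery, and is a welcome simplification; the paper's heavier route is needed because Theorem \ref{main-estim} is also used elsewhere (e.g. to transfer the \emph{lower} bound in Theorem \ref{hk-estimate}), where retaining all modes matters, but for the present upper bound your shortcut suffices.

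The one place you treat as an unresolved ``obstacle'' is actually a closed citation in the paper. The passage from the Muckenhoupt quantity $S_\Gamma(\delta)$ of \eqref{eq:sgamma} to the weighted Sobolev inequality with the explicit constant $\tilde S_\Gamma(\delta)$ of \eqref{eq:sgammatilde} is precisely the content of the one-dimensional Hardy--Sobolev inequality in Opic--Kufner \cite[Thm.~6.2]{OK}: for $p=2$, $q=2\delta/(\delta-2)$ the theorem gives $B\le C\le k(2,q)\,B$ with $k(2,q)=\bigl(1+\tfrac{q}{2}\bigr)^{1/q}\bigl(1+\tfrac{2}{q}\bigr)^{1/2}$, and a short computation shows $k(2,q)^{-2}=\bigl((\delta-2)^{\delta-2}\delta^\delta/(2(\delta-1))^{2(\delta-1)}\bigr)^{1/\delta}$, which is exactly the prefactor in \eqref{eq:sgammatilde}. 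So no bespoke Bliss-type sharp-constant analysis is required; once you replace that hand-wave with the reference, your proof is complete and your constants match the paper's.
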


\smallskip

It is easily seen that if $\Gamma$ has a global dimension $d>2$, then Theorem \ref{no-vd} is applicable with $\delta=d$ and \eqref{eq:no-vd} recovers the bound from Corollary \ref{finite-dim} (with an explicit constant).

 Contrary to Theorem \ref{hk-estimate}, estimate \eqref{eq:no-vd} is applicable also when $g_0(r) \simeq \exp(C r^\alpha)$ for some $\alpha >0$. In this case Theorem \ref{no-vd}  says that $k(x,x,t) = \mathcal{O}(t^{-n})$ as $t\to\infty$ for any $n\in\N$. Moreover, it is not difficult to check  that for $0< \alpha< 1$ we have $\inf\, $spec$(-\Delta_\Gamma^N) =0$ and therefore the heat kernel decay should be sub-exponential for such values of $\alpha$. 

On the other hand, for $\alpha=1$ it turns out that $\inf\, $spec$(-\Delta_\Gamma^N) >0$, which shows that $k(x,y,t)$ must decay exponentially fast in this case. 
As an example of such trees we will consider so-called \emph{homogeneous} trees. A tree $\Gamma_b$ is called homogeneous if all the edges have the same length $\tau$ and if the branching number $b_k=b\geq2$ is independent of $k$. By scaling we may assume that $\tau=1$. The branching function $g_0=:g_b$ then reads
\begin{equation*}
g_b(r) = b^j, \qquad j< r \leq j+1, \quad j\in\N_0\, .
\end{equation*}
The corresponding Laplacian $-\Delta_{\Gamma_b}^N$ is positive definite and 
\begin{equation} \label{lambdab}
\lambda_b := \inf \text{spec} \big(-\Delta_{\Gamma_b}^N\big) = \Big(\arccos \frac 1{R_b} \Big)^2, \qquad R_b =
\frac{b^{\frac 12}+b^{-\frac 12}}{2}\, ,
\end{equation}
see \cite{SS}.  

\begin{theorem} \label{infinite-dim}
Let $\Gamma_b$ be a homogeneous tree with edge length $1$ and branching
number $b\geq 2$. Then there is a constant $C_b$ such that for any $x\in\Gamma_b$ and any $t>0$ it holds
\begin{equation} \label{eq:homog-hk}
k(x,x,t)\ \leq \  C_b \ e^{-\lambda_b t}\,t^{-3/2}\,  (1+|x|)^2 \,.
\end{equation}
\end{theorem}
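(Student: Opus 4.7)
The plan is to perform a ground-state representation (Doob transform) that shifts the bottom of the spectrum to zero and replaces the original operator by one whose weighted volume grows polynomially of order three.

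First I would produce a strictly positive, spherically symmetric function $\phi$ on $\Gamma_b$ that is smooth on each edge, satisfies $-\phi''=\lambda_b\phi$ on each edge together with Kirchhoff matching at the interior vertices and the Neumann condition at the root. Homogeneity reduces this to iterating a $2\times 2$ transfer matrix whose characteristic polynomial is $b\mu^2-(b+1)\cos(\sqrt{\lambda_b})\mu+1$; the defining identity $\cos\sqrt{\lambda_b}=2\sqrt b/(b+1)$ makes this polynomial the perfect square $(\sqrt b\,\mu-1)^2$, so the transfer matrix has the double eigenvalue $\mu_*=1/\sqrt b$. Consequently $\phi$ propagates across generations $j$ as $(c_1+c_2 j)(1/\sqrt b)^j$, which yields the pointwise estimate
\[
\phi(x)\,\leq\,C\,(1+|x|)\,b^{-|x|/2}.
\]

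Next, conjugation by multiplication by $\phi$ turns $-\Delta_{\Gamma_b}^N-\lambda_b$ into a non-negative self-adjoint operator $\widetilde L$ on the weighted Hilbert space $L^2(\Gamma_b,\phi^2\,dx)$ with quadratic form $\int_{\Gamma_b}|f'|^2\phi^2\,dx$ and acting as $-\phi^{-2}(\phi^2 f')'$ on each edge. The two heat kernels are then linked by the ground-state identity
\[
k(x,y,t)=e^{-\lambda_b t}\,\phi(x)\,\phi(y)\,\widetilde k(x,y,t),
\]
so, using $\phi(x)^2\leq(1+|x|)^2 b^{-|x|}\leq(1+|x|)^2$, the theorem reduces to the uniform bound $\widetilde k(x,x,t)\leq C\,t^{-3/2}$.

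The decisive feature of the transformed setting is that the new measure has global dimension three: $g_b(r)\phi(r)^2\asymp(1+r)^2$, hence $(\phi^2 dx)(\{|x|\leq R\})\asymp R^3$ for $R\geq 1$. On the spherically symmetric subspace $\widetilde L$ is therefore a Sturm--Liouville operator with effective branching function $(1+r)^2$, i.e.\ of global dimension $d=3$ in the sense of Definition~\ref{dim-rec}. I would adapt the Sobolev/Nash argument behind Theorem~\ref{no-vd} (or Corollary~\ref{finite-dim}) to this weighted form to obtain $\widetilde k(x,x,t)\leq C\,t^{-3/2}$ for $t\geq 1$. For $0<t\leq 1$ the universal bound of Theorem~\ref{1-dim}, which still applies to $\widetilde L$ because the total $\phi^2$-volume is infinite, gives $\widetilde k(x,x,t)\leq(\pi t)^{-1/2}$, trivially absorbed into $Ct^{-3/2}$ on that range.

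The main obstacle is the last step: although the weighted volume growth is right, $\widetilde L$ is not a metric-tree Laplacian in the Dirichlet-form sense of Theorems~\ref{hk-estimate} and \ref{no-vd} because the weight $\phi^2$ in its quadratic form is non-constant, so the corresponding Faber--Krahn / Nash inequality has to be reproved by hand with this weight. One must also verify that the non-radial angular sectors of $-\Delta_{\Gamma_b}^N$ do not spoil the estimate, which follows from the fact that their ground-state energies are at least $\lambda_b$, so that their contribution decays at least as fast. An alternative route that sidesteps the Doob transform entirely would be to start from the explicit spectral resolution on homogeneous trees \cite{SS} and extract the $t^{-3/2}$ factor from the square-root singularity of the spectral density at the band edge $\lambda_b$ by a Laplace-transform computation.
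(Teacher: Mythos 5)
Your transfer-matrix analysis of the radial ground state $\phi$ and the resulting estimate $\phi(x)\asymp(1+|x|)\,b^{-|x|/2}$ is correct (this is \cite[Lemma 4.2]{efk}, which the paper cites), and the ground-state representation $k(x,y,t)=e^{-\lambda_b t}\phi(x)\phi(y)\widetilde k(x,y,t)$ is the right idea. The gap is in the claim that the theorem ``reduces to the uniform bound $\widetilde k(x,x,t)\leq C\,t^{-3/2}$.'' That uniform bound is false, and the argument you sketch to prove it cannot work. The reason is that you are confusing two different measures. The \emph{radial} measure $g_b(r)\phi(r)^2\,dr\asymp(1+r)^2\,dr$ does grow like $R^3$, but the measure $\phi^2\,dx$ on the tree has exponentially small local volume around a point $x$ deep inside the tree: for $\rho\leq 1/2$ and $x$ interior to an edge, $B(x,\rho)$ is just an interval and
\begin{equation*}
\int_{B(x,\rho)}\phi^2\,dy\ \asymp\ \rho\,\phi(x)^2\ \asymp\ \rho\,(1+|x|)^2\,b^{-|x|}\ \longrightarrow\ 0\quad\text{as }|x|\to\infty.
\end{equation*}
Restricting $\widetilde L$ to such an interval with Dirichlet conditions and comparing kernels gives, for $t\lesssim 1$,
\begin{equation*}
\widetilde k(x,x,t)\ \geq\ \widetilde k_I^D(x,x,t)\ \asymp\ \frac{1}{\phi(x)^2\sqrt t}\ \asymp\ \frac{b^{|x|}}{(1+|x|)^2\sqrt t}\,,
\end{equation*}
which is unbounded in $x$ for fixed $t$. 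So no Nash inequality on the full weighted tree $(\Gamma_b,\phi^2\,dx)$ can yield $\sup_x\widetilde k(x,x,t)\leq Ct^{-3/2}$. The factor $b^{-|x|}$ you discard in the step $\phi(x)^2\leq(1+|x|)^2 b^{-|x|}\leq(1+|x|)^2$ is precisely what you cannot afford to lose; you would need $\widetilde k(x,x,t)\leq C\,b^{|x|}\,t^{-3/2}$, and the global Nash argument does not produce that $x$-dependent factor.

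The paper avoids exactly this problem by reversing the order of the two operations. One first uses the partial-wave decomposition and Theorem~\ref{main-estim} to reduce to the \emph{radial} half-line operator $A_b$ on $L_2(\R_+,g_b\,dr)$, picking up the factor $g_0(|x|)=b^{\lfloor|x|\rfloor}$ explicitly; then one performs the Doob transform on that one-dimensional operator, landing in $L_2(\R_+,\omega_b^2 g_b\,dr)$, where the measure $\omega_b^2 g_b\asymp(1+r)^2$ genuinely has dimension three and the weighted Hardy--Sobolev inequality of Opic--Kufner gives the Nash inequality (Proposition~\ref{homog}). The factor $b^{|x|}$ from Theorem~\ref{main-estim} then exactly cancels the $b^{-|x|}$ in $\omega_b(|x|)^2\asymp(1+|x|)^2/g_b(|x|)$, leaving $(1+|x|)^2$. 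To repair your proposal, insert Theorem~\ref{main-estim} \emph{before} the ground-state transform; after that the two arguments coincide. (Your alternative suggestion via the explicit spectral resolution of \cite{SS} and the square-root band-edge singularity is a genuinely different and viable route, but it is not what you develop here.)
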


We conclude this section with a brief outline of the rest of the paper. In Section \ref{sec:1-dim} we prove the general bound from Theorem \ref{1-dim}. The proof of Theorems \ref{hk-estimate}, \ref{no-vd} and \ref{infinite-dim} is based on the fact \cite{Ca,NS1} that the Laplace operator on a symmetric tree can be decomposed into a direct sum of weighted Laplace operators on half-lines. In Section \ref{sect:identity} we show that the diagonal element $k(x,x,t)$ can be bounded from above in terms of the heat kernel of the weighted manifold $([0,\infty), g_0 dr)$, see Theorem \ref{main-estim}. In Section \ref{sect:proofs} we then establish a suitable estimates on the heat kernel of $([0,\infty), g_0 dr)$, see Proposition \ref{basic-estim}.

As we have explained in the introduction, one important consequence of the heat kernel bounds from Theorems \ref{1-dim}, \ref{hk-estimate}, \ref{no-vd} and \ref{infinite-dim} are eigenvalue estimates for Schr\"odinger operators on metric trees. In Section \ref{sect:appl} we will state these inequalities and we will see how the heat kernel method from this section allows us to fundamentally improve upon our previous results in \cite{efk}.


\section{Proof of Theorem \ref{1-dim}}\label{sec:1-dim}

We begin by proving the general heat kernel bound from Theorem \ref{1-dim} for arbitrary graphs. The key ingredient is the following logarithmic Sobolev inequality.

\begin{proposition}[Logarithmic Sobolev inequality] \label{log}
Let $\Gamma$ be a connected graph of infinite volume. Then for any
$u\in H^1(\Gamma)$ and any $a>0$
$$
\frac{a^2}{\pi} \int_\Gamma |u'|^2 \,dx \geq \int_\Gamma |u|^2
\ln\left(\frac{|u|^2}{\|u\|^2}\right)
 \,dx + (1+\ln (a/2)) \int_\Gamma |u|^2 \,dx \,.
$$
\end{proposition}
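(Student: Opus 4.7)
My plan is to reduce to the classical sharp one-dimensional Euclidean logarithmic Sobolev inequality by a two-step symmetrization, and then to introduce the free parameter $a>0$ via the elementary estimate $\ln x\leq \alpha x-\ln\alpha-1$, valid for $x,\alpha>0$.

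First, I would rearrange onto the half-line. Let $u^{\ast}:[0,\infty)\to[0,\infty)$ denote the non-increasing rearrangement of $|u|$. Both the entropy $\int_\Gamma|u|^2\ln(|u|^2/\|u\|^2)\,dx$ and $\|u\|$ depend only on the distribution function of $|u|$ and are unchanged when $u$ is replaced by $u^{\ast}$, while the Dirichlet form satisfies the P\'olya--Szeg\H{o} inequality
\[
\int_0^\infty|(u^{\ast})'|^2\,dr\ \leq\ \int_\Gamma|u'|^2\,dx,
\]
which is available on any connected metric graph of infinite volume. It therefore suffices to prove the proposition in the case $\Gamma=\R_+$ with $u$ non-increasing.

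Given a non-increasing $v\in H^1(\R_+)$, let $\tilde v$ be its even extension to $\R$. Then $\|\tilde v\|^2=2\|v\|^2$, $\|\tilde v'\|^2=2\|v'\|^2$, and a direct computation gives
\[
\int_\R \tilde v^2\ln\frac{\tilde v^2}{\|\tilde v\|^2}\,dx\ =\ 2\int_0^\infty v^2\ln\frac{v^2}{\|v\|^2}\,dx - 2\|v\|^2\ln 2.
\]
Applying Weissler's sharp one-dimensional Euclidean logarithmic Sobolev inequality
\[
\int_\R f^2\ln\frac{f^2}{\|f\|^2}\,dx\ \leq\ \frac{\|f\|^2}{2}\ln\!\left(\frac{2}{\pi e}\,\frac{\|f'\|^2}{\|f\|^2}\right)
\]
to $f=\tilde v$ and rearranging yields the half-line version
\[
\int_0^\infty v^2\ln\frac{v^2}{\|v\|^2}\,dx\ \leq\ \frac{\|v\|^2}{2}\ln\!\left(\frac{8}{\pi e}\,\frac{\|v'\|^2}{\|v\|^2}\right).
\]
Linearizing the right-hand side by $\ln x\leq \alpha x-\ln\alpha-1$ with $\alpha=a^2 e/4$ produces, after direct simplification, the inequality of the proposition on $\R_+$ with prefactor $a^2/\pi$ and additive term $(1+\ln(a/2))\|v\|^2$. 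Combined with the rearrangement step, this proves the proposition.

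The main technical point is the P\'olya--Szeg\H{o} inequality of the first step. Although essentially standard, its proof requires a co-area argument together with the isoperimetric remark that any proper super-level set of $|u|$ in $\Gamma$ has topological boundary of cardinality at least one; both connectedness and the infinite volume of $\Gamma$ enter through this remark.
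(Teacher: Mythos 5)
Your proof is correct and follows the same strategy as the paper's: reduce to the half-line via rearrangement and a P\'olya--Szeg\H{o} inequality on the graph (this is what the paper cites to the argument in [F]), then apply the one-dimensional Euclidean logarithmic Sobolev inequality to the normalized even extension of the decreasing rearrangement. The paper simply cites Lieb--Loss Theorem 8.14, which already carries the free parameter $a$, whereas you recover that form from Weissler's sharp inequality via the tangent-line bound $\ln x \le \alpha x - \ln\alpha - 1$ with $\alpha = a^2 e/4$; this is exactly how the $a$-parametrized version is derived, and your bookkeeping of the factors of $2$ from the even extension correctly produces the additive constant $1 + \ln(a/2)$.
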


\begin{proof}
For any function $u$ on $\Gamma$ vanishing at infinity we consider
its rearrangement $u^*$. By definition, this is the unique
non-increasing function on $[0,\infty)$ with the same distribution
function as $|u|$. In particular, this property implies that
$$
\int_\Gamma |u|^2 \,dx = \int_0^\infty |u^*|^2 \,dr
\quad\text{and}\quad \int_\Gamma |u|^2 \ln |u|^2 \,dx =
\int_0^\infty |u^*|^2 \ln |u^*|^2 \,dr \,.
$$
Moreover, the same argument as in \cite{F} yields that
$$
\int_\Gamma |u'|^2 \,dx \geq \int_0^\infty |(u^*)'|^2 \,dr \,.
$$
Therefore, the assertion follows from the Euclidean logarithmic
Sobolev inequality (applied to symmetric functions), see, e.g., \cite[Thm. 8.14]{LL}.
\end{proof}

\begin{proof}[Proof of Theorem \ref{1-dim}]
By Proposition \ref{log} and the Carlen--Loss argument as presented in \cite[Thm. 8.18]{LL} it follows that
$$
\| e^{t \Delta_\Gamma^N} \|_{L_1(\Gamma)\to L_\infty(\Gamma)} =  \sup_{x,y\in\Gamma} k(x,y,t) \, \leq \, (\pi \, t)^{-1/2}
$$
as required.
\end{proof}



\section{Decomposition of the heat kernel on symmetric trees}
\label{sect:identity}

 In this section we make use of the symmetry of $\Gamma$ and we shall see that it allows us to decompose the heat kernel of $-\Delta_\Gamma^N$ into a sum of heat kernels of one-dimensional operators. This is, of course, reminiscent of the partial wave decomposition in Euclidean space. In the context of metric trees this decomposition is due to \cite{Ca,NS1}.

We recall the definition of the branching function $g_0$ from \eqref{g0} and we introduce the higher order branching functions $g_l$ for $l\in\N$ by
\begin{align} \label{gk}
g_l(r) := \left\{
\begin{array}{l@{\qquad}l}
 0, & r < r_l\, , \\
 1, & r_l \leq r <  r_{l+1}\, , \\
 b_{l+1}b_{l+2}\cdots b_n, & r_n \leq r< r_{n+1},\quad n>l \, .
\end{array}
\right.
\end{align}


\subsection{Partial wave decomposition}

Let $v$ be a vertex of generation $l\in\N$ and denote by $\Gamma_{v,m}$, $m=1,\ldots,b_l$, the mutually disjoint (forward) subtrees rooted at $v$ and by $\chi_{v,m}$ the corresponding characteristic functions. We shall also use the notation $\Gamma_v:=\bigcup_{m=1}^{b_l} \Gamma_{v,m}$. Moreover, let
$$
\omega_l:= e^{2\pi i/b_l}
$$
and put
\begin{equation} \label{y}
Y_{l,v,\sigma}(x) := \frac{1}{\sqrt{b_l\ g_l(|x|)}} \sum_{m=1}^{b_l}
\omega_l^{m \sigma} \chi_{v,m}(x)
\end{equation}
for $\sigma=1,\ldots,b_l-1$ if $l\geq 1$ and
$$
Y_{0,o,1}(x) := \frac{1}{\sqrt{g_0(|x|)}} \,.
$$
If $f$ is any function on $\Gamma$ we put
$$
f_{l,v,\sigma}(r) := \frac{1}{\sqrt{g_l(r)}} \sum_{|x|=r}
\overline{Y_{l,v,\sigma}(x)} \ f(x) \,.
$$
Below we denote by $\sum_{l,v,\sigma}$ summation over all
$l\in\N_0$, all $v$ with $\gen v=l$ and all $\sigma$ with
$1\leq\sigma\leq b_l-1$ (respectively, only $\sigma=1$ if $l=0$).

\begin{proposition}\label{decompf}
For any function $f$ on $\Gamma$ one has
\begin{equation}\label{eq:decompf}
 f(x) = \sum_{l,v,\sigma} f_{l,v,\sigma}(|x|) \ \sqrt{g_l(|x|)} \ Y_{l,v,\sigma}(x)
\end{equation}
and for a.e. $r> 0$
\begin{equation}
 \label{eq:decompnorm}
\sum_{|x|=r} |f(x)|^2 = \sum_{l,v,\sigma} |f_{l,v,\sigma}(r)|^2 \ g_l(r) \,.
\end{equation}
\end{proposition}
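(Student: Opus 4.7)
The plan is to reduce both identities to a single linear-algebra claim: for each fixed $r>0$, the family $\{Y_{l,v,\sigma}\}_{(l,v,\sigma)}$, restricted to the sphere $S_r := \{x\in\Gamma : |x|=r\}$, is an orthonormal basis of $\ell^2(S_r)$ with counting measure. Once this is established, \eqref{eq:decompf} is just the Fourier expansion of $f|_{S_r}$ in that basis, after noting that the coefficient $\sum_{|x|=r}\overline{Y_{l,v,\sigma}(x)}\, f(x)$ equals $\sqrt{g_l(r)}\, f_{l,v,\sigma}(r)$ by definition of $f_{l,v,\sigma}$, and \eqref{eq:decompnorm} is the corresponding Parseval identity.

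To verify orthogonality, I would fix $r>0$ and let $n$ be the unique integer with $r_n \leq r < r_{n+1}$; only triples with $l\leq n$ can contribute, since $g_l(r)=0$ otherwise. The check then splits into three cases. (i) If $l=l'$, $v=v'$ but $\sigma\neq\sigma'$, then using that $\Gamma_{v,m}\cap S_r$ contains exactly $g_l(r)$ points for each $m$, the inner product reduces to $b_l^{-1}\sum_{m=1}^{b_l}\omega_l^{m(\sigma'-\sigma)}=0$ by orthogonality of characters on $\Z/b_l\Z$. (ii) If $l=l'$ but $v\neq v'$, the supports $\Gamma_v,\Gamma_{v'}$ are disjoint. (iii) If $l<l'$, then either $\Gamma_v\cap\Gamma_{v'}=\emptyset$ or $\Gamma_{v'}\subset\Gamma_{v,m}$ for a unique $m$; in the latter subcase $Y_{l,v,\sigma}$ is constant on $\Gamma_{v'}$, and the sum of $Y_{l',v',\sigma'}$ over $\Gamma_{v'}\cap S_r$ vanishes by the same character orthogonality (using $\sigma'\not\equiv 0\pmod{b_{l'}}$). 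Normalization is immediate: $\Gamma_v\cap S_r$ contains $b_l\,g_l(r)$ points, each of which contributes $(b_l g_l(r))^{-1}$ to $\|Y_{l,v,\sigma}\|_{\ell^2(S_r)}^2$.

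Completeness will then follow by a dimension count. The root yields one basis vector, and each of the $b_1\cdots b_{l-1}$ vertices of generation $l\in\{1,\ldots,n\}$ yields $b_l-1$ vectors, so the total number of non-vanishing basis elements is
$$
1 + \sum_{l=1}^{n} b_1 b_2 \cdots b_{l-1}\,(b_l - 1) = b_1 b_2 \cdots b_n = g_0(r) = |S_r|
$$
by telescoping. Since an orthonormal set of size $\dim\ell^2(S_r)$ is automatically a basis, the two identities follow as described. The main bookkeeping step will be case (iii) of the orthogonality check, but it becomes transparent once one views $S_r$ as the disjoint union of its intersections with the subtrees $\Gamma_{v,m}$ at each generation; no genuine analytic difficulty arises, since everything reduces to harmonic analysis on the finite cyclic groups $\Z/b_l\Z$.
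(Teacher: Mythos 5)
Your proof is correct, but it takes the dual route to the one in the paper. The paper establishes directly the completeness (reproducing-kernel) relation
\begin{equation*}
\sum_{l,v,\sigma} Y_{l,v,\sigma}(x)\,\overline{Y_{l,v,\sigma}(x')}=\delta_{x,x'}\qquad(|x|=|x'|),
\end{equation*}
by a case analysis based on the generation $N$ of the last common ancestor of $x$ and $x'$: for $l\le N-1$ each term contributes $1/(b_lg_l(r))$, and the $l=N$ term contributes $-1/(b_Ng_N(r))$ when $x\ne x'$ (or $(b_N-1)/b_N$ when $x=x'$), after which a telescoping computation closes the identity. Both \eqref{eq:decompf} and \eqref{eq:decompnorm} drop out immediately once one pairs the identity with an arbitrary $f$. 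The paper then \emph{remarks}, without carrying it out, that this is equivalent ``by a duality argument'' to the orthonormality relation $\sum_{|x|=r}Y_{l,v,\sigma}(x)\overline{Y_{l',v',\sigma'}(x)}=\delta_{l,l'}\delta_{v,v'}\delta_{\sigma,\sigma'}$. You, by contrast, prove exactly this orthonormality relation, and then supply the dimension count $1+\sum_{l=1}^{n}(b_1\cdots b_{l-1})(b_l-1)=g_0(r)=|S_r|$ to upgrade the orthonormal family to a basis of $\ell^2(S_r)$, so that \eqref{eq:decompf} and \eqref{eq:decompnorm} become Fourier expansion and Parseval. Both arguments ultimately rest on character orthogonality on $\Z/b_l\Z$ together with a telescoping sum; yours trades the paper's slightly delicate diagonal/off-diagonal case split for a more routine orthogonality check plus the (easy) dimension count, and it has the advantage of making explicit the completeness step the paper leaves implicit. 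One small point of bookkeeping: in case (iii) you should also observe that when $v'\notin\Gamma_v$ the supports are automatically disjoint because a descendant of $v'$ cannot be a descendant of $v$ unless $v'$ is, and that the $l=0$ function $Y_{0,o,1}$ falls under the same argument since it is constant on $S_r$; both are trivial but worth a word.
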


\begin{proof}
Both equalities \eqref{eq:decompf} and \eqref{eq:decompnorm} will
follow if we can show that
\begin{equation}\label{eq:decompbasic}
\sum_{l,v,\sigma} Y_{l,v,\sigma}(x)\  \overline{Y_{l,v,\sigma}(x')} = \delta_{x,x'}
\quad \text{if}\quad |x|=|x'| \,.
\end{equation}
Let $|x|=|x'|=r$ and define
\begin{equation}
N = \max \left\{\text{gen}(v)\, :\, x,x'\in \Gamma_v\right\}.
\end{equation}
Since \eqref{eq:decompbasic} is obvious for $r< r_1$, we may assume that $N\geq 1$. Then
$$
\sum_{l,v,\sigma} Y_{l,v,\sigma}(x)\  \overline{Y_{l,v,\sigma}(x')} =
\frac{1}{g_0(r)} + \sum_{l=1}^N\sum_{\text{gen}(v)=l}\sum_{\sigma=1}^{b_l-1}\,
Y_{l,v,\sigma}(x) \ \overline{Y_{l,v,\sigma}(x')}.
$$
(The first term on the right side is the contribution from $l=0$.) 
For each $1\leq l\leq N-1$ there is a unique vertex $v$ of the $l$th
generation and a unique $1\leq m\leq b_l$ such that $x,x'\in \Gamma_{v,m}$. Hence from \eqref{gk}, \eqref{g0} and \eqref{y} we get
\begin{align*}
\frac{1}{g_0(r)} + \sum_{l=1}^{N-1}\sum_{\text{gen}(v)=l}\sum_{\sigma=1}^{b_l-1}\,
Y_{l,v,\sigma}(x) \ \overline{Y_{l,v,\sigma}(x')}
& = \frac{1}{g_0(r)} + \sum_{l=1}^{N-1}\, \sum_{\sigma=1}^{b_l-1} \, \frac{1}{b_l g_l(r)} \\
& = \frac{1}{g_0(r)} + \frac{1}{g_0(r)}\, \sum_{l=1}^{N-1}\, b_0\cdots b_{l-1}(b_l-1) \\
& = \frac{b_0\cdots b_{N-1}}{g_0(r)} \,.
\end{align*}
In order to study the contribution from $l=N$ we suppose first that $x\neq x'$.
Then there is a unique vertex $w$ of the $N$th
generation and two numbers $m\neq m'$ such that $x\in\Gamma_{w,m}$ and
$x'\in\Gamma_{w,m'}$. Using the fact that
$$
\sum_{\sigma=1}^{b_N}\, e^{\frac{2\pi i \sigma (m-m')}{b_N}} = 0
$$
we find
\begin{align*}
\sum_{\text{gen}(v)=N}\sum_{\sigma=1}^{b_N-1}\,
Y_{l,v,\sigma}(x) \ \overline{Y_{l,v,\sigma}(x')}
= \frac{1}{b_N\, g_N(r)} \, \sum_{\sigma=1}^{b_N-1}\,  e^{\frac{2\pi
i \sigma (m-m')}{b_N}}
= - \frac{1}{b_N\, g_N(r)} \,.
\end{align*}
Assertion \eqref{eq:decompbasic} is now a consequence of
$$
\frac{b_0\cdots b_{N-1}}{g_0(r)} = \frac{1}{b_N\, g_N(r)} \,,
$$
see \eqref{gk} and \eqref{g0}.

Assume now that $x=x'$. In this case we have $g_N(r)=1$ and hence
$$
\sum_{\text{gen}(v)=N}\sum_{\sigma=1}^{b_N-1}\,
Y_{l,v,\sigma}(x) \ \overline{Y_{l,v,\sigma}(x')}
= \frac{b_N-1}{b_N} \,.
$$
Assertion \eqref{eq:decompbasic} in this case is a consequence of
$$
\frac{b_0\cdots b_{N-1}}{g_0(r)} + \frac{b_N-1}{b_N} = 1 \,,
$$
see again \eqref{gk} and \eqref{g0}. This concludes the proof of \eqref{eq:decompbasic}.
\end{proof}

\noindent By a duality argument \eqref{eq:decompbasic} is equivalent
to
$$
\sum_{|x|=r} Y_{l,v,\sigma}(x)\  \overline{Y_{l',v',\sigma'}(x)} =
\delta_{l,l'}\, \delta_{v,v'}\, \delta_{\sigma,\sigma'}.
$$

The reason for including the factor $1/\sqrt{g_l(r)}$ in the
definition of $f_{l,v,\sigma}$ is that in this way $f_{l,v,\sigma}$
inherits continuity and differentiability properties of $f$. More
precisely, one has

\begin{proposition}\label{decompf'}
If $u$ is a locally absolutely continuous function on $\Gamma$, then
the $u_{l,v,\sigma}$ are locally absolutely continuous. One has
\begin{equation}\label{eq:decompf'}
 u'(x) = \sum_{l,v,\sigma} u_{l,v,\sigma}'(|x|) \ \sqrt{g_l(|x|)} \ Y_{l,v,\sigma}(x)
\end{equation}
and for a.e. $r> 0$
\begin{equation}
 \label{eq:decompnorm'}
\sum_{|x|=r} |u'(x)|^2 = \sum_{l,v,\sigma} |u_{l,v,\sigma}'(r)|^2 \ g_l(r) \,.
\end{equation}
\end{proposition}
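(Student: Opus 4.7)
The plan is to prove the three claims in the proposition in sequence, each reducing essentially to the explicit formula one gets after combining the definitions of $Y_{l,v,\sigma}$ and of $f_{l,v,\sigma}$. Starting from the definitions one rewrites
\[
 u_{l,v,\sigma}(r) = \frac{1}{g_l(r)\sqrt{b_l}}\sum_{m=1}^{b_l} \overline{\omega_l^{m\sigma}} \sum_{\substack{x\in \Gamma_{v,m}\\ |x|=r}} u(x)
\]
for $l\geq 1$ (with the obvious modification for $l=0$). On any interval $(r_n,r_{n+1})$ the branching function $g_l$ is constant, the points with $|x|=r$ run through the interiors of fixed edges, and so $u_{l,v,\sigma}$ is, up to a constant, a finite sum of values of $u$ on each edge parametrised by arc length; local absolute continuity therefore passes from $u$ to $u_{l,v,\sigma}$ on each such interval.

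The delicate step is continuity at the vertex distances $r=r_n$. Here I would use the continuity of $u$ across vertices (which is part of the definition of local absolute continuity on $\Gamma$): points $x\in\Gamma_{v,m}$ with $|x|=r\to r_n^-$ approach the vertices $w$ of generation $n$ inside $\Gamma_{v,m}$, while points with $|x|=r\to r_n^+$ approach each such $w$ along exactly $b_n$ outgoing edges. Hence
\[
\lim_{r\to r_n^+}\sum_{\substack{x\in \Gamma_{v,m}\\ |x|=r}} u(x) = b_n \lim_{r\to r_n^-}\sum_{\substack{x\in \Gamma_{v,m}\\ |x|=r}} u(x),
\]
and this factor $b_n$ is exactly cancelled by the jump $g_l(r_n^+)/g_l(r_n^-)=b_n$ appearing in the prefactor. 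Thus $u_{l,v,\sigma}$ is continuous at $r_n$, and combined with the previous step it is locally absolutely continuous on $[0,\infty)$. This is the main technical obstacle; everything else is almost formal.

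Once local absolute continuity is established, the identity \eqref{eq:decompf'} follows by differentiating \eqref{eq:decompf} on the interior of each edge. Away from the vertices both $Y_{l,v,\sigma}(x)$ and $g_l(|x|)$ are constant in the arc length variable, so only the factor $u_{l,v,\sigma}'(|x|)$ contributes a derivative, yielding exactly \eqref{eq:decompf'}.

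Finally, to obtain \eqref{eq:decompnorm'} I would substitute \eqref{eq:decompf'} into $\sum_{|x|=r}|u'(x)|^2$ and pull the factors $u_{l,v,\sigma}'(r)\sqrt{g_l(r)}$ outside the sum over $\{x:|x|=r\}$, since they depend only on $r$. The orthogonality relation $\sum_{|x|=r} Y_{l,v,\sigma}(x)\overline{Y_{l',v',\sigma'}(x)} = \delta_{l,l'}\delta_{v,v'}\delta_{\sigma,\sigma'}$ noted immediately after \eqref{eq:decompbasic} then collapses the double sum to a diagonal one, producing \eqref{eq:decompnorm'}.
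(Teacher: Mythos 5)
Your proposal is correct and fills in, in the same spirit, the details that the paper dismisses as ``easily verified''. The key cancellation you identify --- that the jump $g_l(r_n^+)/g_l(r_n^-)=b_n$ in the prefactor exactly compensates the factor $b_n$ coming from each generation-$n$ vertex splitting into $b_n$ outgoing edges --- is precisely what makes $u_{l,v,\sigma}$ continuous at the $r_n$, and your derivation of \eqref{eq:decompnorm'} via the orthogonality relation $\sum_{|x|=r}Y_{l,v,\sigma}\overline{Y_{l',v',\sigma'}}=\delta_{l,l'}\delta_{v,v'}\delta_{\sigma,\sigma'}$ is equivalent to the paper's route of applying \eqref{eq:decompnorm} from Proposition \ref{decompf} to $u'$, since that identity is itself derived from the same orthogonality.
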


The first part of the assertion is easily verified, see also
\cite{NS1}. The second part is a consequence of Proposition
\ref{decompf}.


\subsection{The reduced operators and their heat kernels}

Let $A_l, \, l\in\N_0$, be the non-negative operators
in $L_2((r_l,\infty),g_l dr)$ generated by the quadratic forms
\begin{equation} \label{q-form}
\int_{r_l}^\infty |u'|^2 \,g_l \,dr
\end{equation}
with form domains
\begin{equation*}
H^1((0,\infty), g_0\, dr)\ \mathrm{if}\ l=0
\qquad\mathrm{and}\qquad
H^1_0((r_l,\infty),g_l\, dr) \ \mathrm{if}\ l\in\N \,.
\end{equation*}
Since, by Propositions \ref{decompf} and \ref{decompf'},
$$
\|u\|^2 = \sum_{l,v,\sigma} \int_{r_l}^\infty |u_{l,v,\sigma}|^2 \,g_l \,dr
\qquad\mathrm{and}\qquad
\|u'\|^2 = \sum_{l,v,\sigma} \int_{r_l}^\infty |u_{l,v,\sigma}'|^2 \,g_l \,dr \,,
$$
we have
$$
(-\Delta_\Gamma^N u)_{l,v,\sigma} = A_l\,  u_{l,v,\sigma} \,.
$$
Hence by the spectral theorem
\begin{equation}\label{eq:heatdecomp}
(e^{t\Delta_\Gamma^N}\, u)_{l,v,\sigma} = e^{-t A_l}\,
u_{l,v,\sigma} \,.
\end{equation}
Let $k(x,y,t)$  be the heat kernel of $-\Delta_\Gamma^N$ and let
$k_l(r,s,t), \, l\in\N_0$, be the heat kernels of the operators $A_l$, that is,
$$
(e^{t\Delta_\Gamma^N} u)(x) = \int_\Gamma k(x,y,t) u(y) \,dy ,
$$
and
\begin{equation} \label{Al}
(e^{-tA_l}f)(r) = \int_{r_l}^\infty k_l(r,s,t) f(s)\,g_l(s)\,ds \,.
\end{equation}
From the Beurling-Deny conditions (see, e.g., \cite[Sec. XIII.12]{ReSi4}) one infers that $k_l(r,s,t)$ is non-negative.
For technical reasons we extend $k_l(r,s,t)$ by zero to all negative
values of $(r-r_l)$ and $(s-r_l)$. Combining \eqref{eq:heatdecomp}
with \eqref{Al} we find
\begin{align*}
(e^{t\Delta_\Gamma^N} u)(x) & = \sum_{l,v,\sigma}\,
( \exp(t\Delta_\Gamma^N)\, u)_{l,v,\sigma} (|x|)\, \sqrt{g_l(|x|)}\,
\, Y_{l,v,\sigma}(x) \\
& =  \sum_{l,v,\sigma}\,  \int_{r_l}^\infty k_l(|x|,s,t)\,
\sqrt{g_l(s)}\, \, \sum_{|y|=s}\,  \sqrt{g_l(|x|)}\, \,
Y_{l,v,\sigma}(x)\, \overline{Y_{l,v,\sigma}(y)}\, \, u(y)\, ds \\
& = \int_0^\infty \sum_{|y|=s}\, \sum_{l,v,\sigma} k_l(|x|,|y|,t)\,
\sqrt{g_l(|x|)\ g_l(|y|)}\, \, Y_{l,v,\sigma}(x)\,
\overline{Y_{l,v,\sigma}(y)}\, u(y)\, ds\\
& = \int_\Gamma \sum_{l,v,\sigma} k_l(|x|,|y|,t)\, \sqrt{g_l(|x|)\
g_l(|y|)}\, \, Y_{l,v,\sigma}(x)\, \overline{Y_{l,v,\sigma}(y)}\, u(y)\, dy \,.
\end{align*}
This shows that
\begin{equation} \label{eq:heat}
k(x,y,t) = \sum_{l,v,\sigma} \sqrt{g_l(|x|)\ g_l(|y|)}\, \,
Y_{l,v,\sigma}(x)\, \overline{Y_{l,v,\sigma}(y)}\, \,
k_l(|x|,|y|,t) \,.
\end{equation}
In particular, on the diagonal we have
\begin{equation}
\label{eq:heatdiag} k(x,x,t) = k_0(|x|,|x|,t) + \sum_{l=1}^\infty \frac{b_l-1}{b_l}\, \,
k_l(|x|,|x|,t) \,.
\end{equation}
Note that the sum involves only finitely many terms since
the $l$'th summand is non-zero only when $|x|<r_l$. Here is a
relation between the heat kernels for different values of $l$:

\begin{lemma}\label{comp}
For all $l\geq 1$ one has
\begin{equation} \label{link}
k_l(r,s,t) \leq b_0 \cdots b_l\ k_0(r,s,t).
\end{equation}
\end{lemma}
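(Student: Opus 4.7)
The plan is to express the inequality as the composition of two monotonicity steps connecting $A_l$ and $A_0$: first, a unitary rescaling of the underlying Hilbert space that converts $A_l$ into an auxiliary operator $\tilde A_l$ on $L_2((r_l,\infty), g_0\, dr)$, and second, a standard domain monotonicity comparison between $\tilde A_l$ and $A_0$, both acting with the same reference measure $g_0\, dr$ but on nested intervals.

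For the first step, the key observation is that, by the definitions \eqref{g0} of $g_0$ and \eqref{gk} of $g_l$, the two weights agree up to a multiplicative constant on the common interval:
\[
g_0(r) = C_l\, g_l(r), \qquad r > r_l, \qquad C_l := b_0 b_1 \cdots b_l.
\]
Consequently the multiplication operator $u \mapsto C_l^{-1/2} u$ is a unitary isomorphism from $L_2((r_l,\infty), g_l\, dr)$ onto $L_2((r_l,\infty), g_0\, dr)$, and it conjugates $A_l$ into the non-negative self-adjoint operator $\tilde A_l$ generated by the form $\int_{r_l}^\infty |u'|^2 g_0\, dr$ on $H^1_0((r_l,\infty), g_0\, dr)$. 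Tracking the heat kernel under a rescaling of the reference measure then gives
\[
\tilde k_l(r,s,t) = C_l^{-1}\, k_l(r,s,t), \qquad r,s > r_l.
\]

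For the second step, extension by zero embeds $L_2((r_l,\infty), g_0\, dr)$ isometrically into $L_2((0,\infty), g_0\, dr)$ and maps the form domain $H^1_0((r_l,\infty), g_0\, dr)$ of $\tilde A_l$ into $H^1((0,\infty), g_0\, dr)$, preserving the value of the quadratic form. In other words, $\tilde A_l$ is obtained from $A_0$ by adjoining a Dirichlet condition at the interior point $r_l$. Since both semigroups $e^{-t\tilde A_l}$ and $e^{-tA_0}$ are positivity preserving by Beurling--Deny, the standard domain monotonicity principle for heat kernels of Dirichlet forms (see, e.g., \cite{Da}) yields
\[
\tilde k_l(r,s,t) \leq k_0(r,s,t), \qquad r,s > r_l.
\]
Combined with the previous display, this produces the desired bound $k_l(r,s,t) \leq C_l\, k_0(r,s,t)$.

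The main point to verify is the domain monotonicity in the second step: one must check that an $H^1_0((r_l,\infty), g_0\, dr)$ function, once extended by zero across $r_l$, really belongs to $H^1((0,\infty), g_0\, dr)$ with preserved form value. This works precisely because the Dirichlet trace at $r_l$ is zero, so the extension is continuous and no $\delta$-contribution to the weak derivative appears at $r_l$, despite the fact that the weight $g_0$ itself is discontinuous there. Everything else is bookkeeping.
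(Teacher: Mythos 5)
Your argument is correct and follows the same two-step structure as the paper's proof: (i) the unitary rescaling by $\sqrt{b_0\cdots b_l}$ identifying $A_l$ with the operator $\tilde A_l$ on $L_2((r_l,\infty),g_0\,dr)$ with a Dirichlet condition at $r_l$, and (ii) the pointwise heat kernel comparison $\exp(-t\tilde A_l)(r,s)\leq k_0(r,s,t)$. The only difference is in how step (ii) is justified: you appeal to the standard domain monotonicity (domination) principle for positivity-preserving semigroups, whereas the paper derives the domination concretely by introducing the penalized operators $B_l^M=A_0+M\chi_l$, using Trotter's product formula to get $\exp(-tB_l^M)(r,s)\leq k_0(r,s,t)$, and passing to the limit $M\to\infty$ via monotone convergence in the strong resolvent sense. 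The Trotter route is self-contained and avoids having to verify the ideal-property hypothesis behind the abstract domination criterion; your version is shorter but leans on that criterion as a black box. Either way the lemma is established.
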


\begin{proof}
Let $\chi_l$ be the characteristic function of the interval $(0,r_l)$ and
consider the family
of operators $B_l^M = A_0 +M\chi_l$ in $L_2(\R_+,g_0dr)$ for a constant $M> 0$.
With Trotter's product formula one sees that
$$
\exp(-tB_l^M)(r,s) \leq k_0(r,s,t) \,.
$$
Moreover, by monotone convergence $B_l^M$ converges as $M\to\infty$
in strong resolvent sense to the operator $0\oplus \tilde A_l$ in
$L_2((0,r_l),g_0dr)\oplus L_2((r_l,\infty),g_0dr)$, where $\tilde A_l$
is associated to the quadratic form $\int_{r_l}^\infty |u'|^2 g_0
\,dr$ with boundary condition $u(r_l)=0$. Hence for $r,s\geq r_l$,
\begin{equation}
\exp(-t\tilde A_l)(r,s) \leq k_0(r,s,t) \,.
\end{equation}
The operator $U$ of multiplication by the constant $\sqrt{b_0\cdots
b_l}$ maps $L_2((r_l,\infty),g_0dr)$ unitarily onto
$L_2((r_l,\infty),g_ldr)$ and satisfies $U^* A_l U= \tilde A_l$. Hence
$U^* \exp(-t A_l) U= \exp(-t \tilde A_l)$ which means
$$
k_l(r,s,t) = b_0\cdots b_l \exp(-t\tilde A_l)(r,s) \,.
$$
This proves inequality \eqref{link}.
\end{proof}

\begin{theorem} \label{main-estim}
Let $\Gamma$ be a symmetric tree. Then for any $x\in\Gamma$ we have
\begin{align}
k(x,x,t)\, & \leq \, k_0(|x|, |x|, t)\, g_0(|x|), \label{D}
\end{align}
\end{theorem}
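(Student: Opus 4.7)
The plan is to combine the diagonal decomposition \eqref{eq:heatdiag} with the comparison estimate of Lemma \ref{comp}, and then observe that the resulting numerical coefficient telescopes to exactly $g_0(|x|)$.

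First I would fix $x\in\Gamma$ and let $N\in\N_0$ be the unique integer with $r_N<|x|\leq r_{N+1}$, so that by \eqref{g0} we have $g_0(|x|)=b_0 b_1\cdots b_N$. Since $k_l(r,s,t)$ has been extended by zero outside $r,s\geq r_l$, only the terms with $l\leq N$ in \eqref{eq:heatdiag} contribute, and
\begin{equation*}
 k(x,x,t) = k_0(|x|,|x|,t) + \sum_{l=1}^N \frac{b_l-1}{b_l}\, k_l(|x|,|x|,t).
\end{equation*}

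Next I would apply Lemma \ref{comp} to each of the finitely many summands, which yields
\begin{equation*}
 \frac{b_l-1}{b_l}\, k_l(|x|,|x|,t) \leq \frac{b_l-1}{b_l}\, b_0 b_1\cdots b_l \, k_0(|x|,|x|,t) = (b_l-1)\, b_0\cdots b_{l-1}\, k_0(|x|,|x|,t).
\end{equation*}
The key algebraic observation is the telescoping identity
\begin{equation*}
 (b_l-1)\, b_0\cdots b_{l-1} = b_0\cdots b_l - b_0\cdots b_{l-1},
\end{equation*}
which gives
\begin{equation*}
 \sum_{l=1}^N (b_l-1)\, b_0\cdots b_{l-1} = b_0 b_1\cdots b_N - b_0 = g_0(|x|)-1,
\end{equation*}
using the convention $b_0=1$. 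Combining these pieces,
\begin{equation*}
 k(x,x,t) \leq \bigl(1 + g_0(|x|)-1\bigr)\, k_0(|x|,|x|,t) = g_0(|x|)\, k_0(|x|,|x|,t),
\end{equation*}
which is \eqref{D}.

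There is really no substantive obstacle here: all the hard work (the partial wave decomposition producing \eqref{eq:heatdiag} and the Trotter-product comparison underlying Lemma \ref{comp}) has already been done, and the present argument is essentially a bookkeeping exercise with a telescoping sum. The only minor point requiring care is the boundary case when $|x|$ equals some $r_l$, which is handled by the convention that $k_l$ vanishes outside $[r_l,\infty)$ and does not affect the conclusion.
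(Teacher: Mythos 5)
Your proof is correct and follows exactly the paper's argument: combine the diagonal decomposition \eqref{eq:heatdiag} with Lemma \ref{comp}, then evaluate the resulting finite sum via the telescoping identity $\sum_{l=1}^{N}(b_l-1)\,b_0\cdots b_{l-1}=b_0\cdots b_N-1$. The only difference is cosmetic: the paper writes the telescoping step in a single compact display, while you spell it out, but the content is identical.
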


\begin{proof}
Let $x\in\Gamma$ with $r_L<|x|\leq r_{L+1}$. Combining equation \eqref{eq:heatdiag} and Lemma \ref{comp} we obtain
\begin{align*}
k(x,x,t) & = k_0(|x|,|x|,t) + \sum_{l=1}^L \frac{b_l-1}{b_l} k_l(|x|,|x|,t)
 \leq k_0(|x|,|x|,t) \left( 1+ \sum_{l=1}^L b_0 \cdots b_{l-1} (b_l-1) \right) \\
 & \\
& = k_0(|x|,|x|,t)\, b_0 \cdots b_L = k_0(|x|,|x|,t)\, g_0(|x|) \,,
\end{align*}
as claimed.
\end{proof}

\smallskip


\section{Proof of the main results}
 \label{sect:proofs}

In this section we shall prove Theorems \ref{hk-estimate}, \ref{no-vd} and \ref{infinite-dim}. According to Theorem \ref{main-estim} the proof of upper bounds on the heat kernel of $-\Delta_\Gamma^N$ on the tree $\Gamma$ is reduced to the proof of upper bounds on the heat kernel of the operator $A_0$ on the half-line. To obtain such bounds for $A_0$ under various assumptions on $g_0$ is the topic of the following three subsections.


\subsection{Proof of Theorem \ref{hk-estimate}}

In order to derive our first bound on the heat kernel of $A_0$ we make use of a powerful equivalence theorem due to Grigor'yan and Saloff-Coste; see the surveys \cite{grig,sc} and the references therein. According to this result, a heat kernel bound follows from a volume doubling condition and a family of Poincar\'e inequalities for the weighted manifold $M(g_0)=([0,\infty), g_0\, dr)$. These bounds will be shown below in Lemmas~\ref{vd} and \ref{pi}, respectively.
 
Let $B(z,r)$ be the ball of radius $r$ centered in $z$ and let $V(z,r)$ be its volume in $M(g_0)$. More explicitly,
$$
B(z,r) = (\max\{z-r,0\},z+r)\,,
\qquad
V(z,r) = \int _{B(z,r)} g_0(s)\,ds \,.
$$ 
We also recall the doubling condition \eqref{xdouble} on $g_0$.

\smallskip

\begin{lemma}[Volume doubling] \label{vd}
Assume \eqref{xdouble}. Then the weighted manifold 
$M(g_0)$ satisfies the following volume doubling condition: 
for any ball $B(z,r)$  we have 
\begin{equation} \label{eq:vd}
V(z,r) \, \leq\,  2\, C_0 \, V(z, r/2).
\end{equation}
\end{lemma}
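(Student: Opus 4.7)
The plan is to prove $V(z,r) \leq 2C_0 V(z,r/2)$ using two facts about $g_0$: it is monotone non-decreasing (immediate from $g_0(r) = b_0 b_1 \cdots b_l$) together with the hypothesized doubling $g_0(2s) \leq C_0 g_0(s)$. Set $G(t) := \int_0^t g_0(s)\,ds$, so that $V(z,r) = G(z+r) - G(\max(z-r,0))$.

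The main device is a change of variable $s = 2u$ in the defining integral of $V(z,r)$, followed by one application of the doubling hypothesis:
\[
V(z,r) = 2\int_{\max(z-r,0)/2}^{(z+r)/2} g_0(2u)\,du \leq 2C_0 \int_{\max(z-r,0)/2}^{(z+r)/2} g_0(u)\,du.
\]
It therefore suffices to prove the purely geometric comparison
\[
\int_{\max(z-r,0)/2}^{(z+r)/2} g_0(u)\,du \leq \int_{\max(z-r/2,0)}^{z+r/2} g_0(u)\,du = V(z,r/2).
\]

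I would establish this by case analysis according to whether the ball is truncated at the origin. In the \emph{untruncated} case $z \geq r$, both integration intervals have length $r$ and the right one is the rightward translate of the left by $z/2 \geq 0$, so the inequality follows directly from monotonicity of $g_0$. In the \emph{truncated} case $z < r$, the left-hand integral equals $G((z+r)/2)$. The sub-case $z < r/2$ is then immediate from $(z+r)/2 \leq z + r/2$. The sub-case $r/2 \leq z < r$ reduces to showing $G((z+r)/2) + G(z - r/2) \leq G(z + r/2)$, which I would derive by combining the elementary superadditivity $G(a) + G(b) \leq G(a+b)$ (itself a consequence of monotonicity of $g_0$, via $\int_a^{a+b} g_0 \geq \int_0^b g_0$) applied with $a = (z+r)/2$ and $b = z - r/2$, together with the inequality $3z/2 \leq z + r/2$ valid when $z \leq r$.

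The main technical nuisance is the last sub-case $r/2 \leq z < r$, where the truncation at the origin disrupts the clean translation argument available in the untruncated case; invoking superadditivity of $G$ at just this point is what keeps the overall multiplicative constant equal to $2C_0$ rather than something larger.
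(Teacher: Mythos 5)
Your proof is correct. It takes a genuinely different route from the paper's, which works at the level of derivatives rather than integrals. The paper observes that $w(r):=V(z,r)$ satisfies $w'(r)=g_0(z+r)+g_0(z-r)$ (with the convention $g_0\equiv 0$ on $(-\infty,0)$), shows the pointwise bound $w'(r)\le C_0 w'(r/2)$ by applying \eqref{xdouble} termwise and then using monotonicity of $g_0$ to pass from $g_0(\tfrac{z\pm r}{2})$ to $g_0(z\pm\tfrac r2)$, and finally integrates $w(r)=\int_0^r w'(s)\,ds\le C_0\int_0^r w'(s/2)\,ds=2C_0\,w(r/2)$. The advantage of that route is that the truncation at the origin is absorbed invisibly by the convention $g_0(t)=0$ for $t<0$, so no case analysis is needed. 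Your version performs the substitution $s=2u$ directly in the volume integral, applies the doubling hypothesis once, and then proves the resulting ``geometric'' comparison $\int_{\max(z-r,0)/2}^{(z+r)/2} g_0 \le \int_{\max(z-r/2,0)}^{z+r/2} g_0$ by hand; the price is the three-way case split and the extra ingredient of superadditivity of $G(t)=\int_0^t g_0$, but each step is elementary and self-contained, and you do keep the same constant $2C_0$. In essence the two arguments repackage the identical monotonicity content; the paper hides it in one derivative inequality, while you spell it out as a comparison of shifted and truncated intervals.
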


\begin{proof}
Obviously, $\frac{d}{dr} V(z,r) = g_0(z+r)+g_0(z-r)$ (with the convention that $g_0(t)=0$ if $t<0$). Moreover, by \eqref{xdouble} and by the fact that $g_0$ is non-decreasing,
$$
g_0(z+r)+g_0(z-r) \leq C_0 \left( g_0(\frac{z+r}2) + g_0(\frac{z-r}2) \right)
\leq C_0 \left( g_0(z+\frac{r}2) + g_0(z-\frac{r}2) \right) \,.
$$
This means that for any fixed $z$, the function $w(r)=V(z,r)$ satisfies $w'(r) \leq C_0 w'(r/2)$. Thus
$$
w(r) = \int_0^r w'(s)\,ds \leq C_0 \int_0^r w'(s/2)\,ds = 2 C_0 \int_0^{r/2} w'(t)\,dt = 2 C_0\, w(r/2) \,,
$$
as claimed.
\end{proof}

\begin{lemma}[Poincar\'e inequality] \label{pi}
The weighted manifold  $M(g_0)$ satisfies the following Poincar\' e inequality:  for any ball $B(z,r)$ and any $f\in H^1(B(z,r))$ it holds
\begin{equation} \label{eq:pi}
\inf_{\xi\in\R}\, \int_{B(z,r)}  |f(s)-\xi|^2\, g_0(s) \, ds \, \leq\, 4\, r^2 \int_{B(z,r)}
|f'(s)|^2\, g_0(s)\, ds \,.
\end{equation}
\end{lemma}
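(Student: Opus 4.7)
The plan is to pick a convenient (non-optimal) choice of $\xi$, namely the boundary value $\xi = f(b)$ where $b = z+r$ is the right endpoint of the ball, and then reduce everything to a one-dimensional Hardy/Poincaré computation using the fact that $g_0$ is non-decreasing. Write $a = \max\{z-r,0\}$ and $b = z+r$, so that $B(z,r)=(a,b)$ and $b-a \leq 2r$. Since $f \in H^1(a,b)$ in one dimension, $f$ is continuous up to the endpoints and $f(b)$ is well defined.

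For $s \in (a,b)$, the fundamental theorem of calculus gives
\begin{equation*}
f(s) - f(b) = - \int_s^b f'(u)\, du.
\end{equation*}
I would then apply the weighted Cauchy--Schwarz inequality by splitting $f'(u) = (f'(u) \sqrt{g_0(u)}) \cdot (1/\sqrt{g_0(u)})$, which yields
\begin{equation*}
|f(s) - f(b)|^2 \leq \left( \int_s^b \frac{du}{g_0(u)} \right) \left( \int_s^b |f'(u)|^2 g_0(u)\, du \right).
\end{equation*}
The monotonicity of $g_0$ (which follows from $b_l \geq 2$ in the definition of the branching function) gives $g_0(u) \geq g_0(s)$ for $u \in [s,b]$, hence $\int_s^b du/g_0(u) \leq (b-s)/g_0(s)$.

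Multiplying by $g_0(s)$, the weight cancels on the right and I get
\begin{equation*}
|f(s)-f(b)|^2\, g_0(s) \leq (b-s) \int_s^b |f'(u)|^2 g_0(u)\, du.
\end{equation*}
Integrating over $s \in (a,b)$ and swapping the order of integration (Fubini) collapses the double integral to
\begin{equation*}
\int_a^b |f(s)-f(b)|^2 g_0(s)\, ds \leq \int_a^b |f'(u)|^2 g_0(u) (u-a)(b-a)\, du \leq (b-a)^2 \int_a^b |f'|^2 g_0\, du.
\end{equation*}
Using $b-a \leq 2r$ and taking the infimum over $\xi$ on the left (which is dominated by the choice $\xi = f(b)$) yields \eqref{eq:pi}.

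There is no real obstacle here: the one subtlety is justifying the pointwise evaluation $f(b)$, which is legitimate by the 1D Sobolev embedding $H^1 \hookrightarrow C^0$, and ensuring that the weight $g_0$ is indeed monotone, which is immediate from the definition \eqref{g0}. The constant $4r^2$ comes essentially for free from $(b-a)^2 \leq (2r)^2$; it is not optimal (the sharp constant for a uniform weight on an interval is $(b-a)^2/\pi^2$), but it is all that is needed for the Grigor'yan--Saloff-Coste equivalence invoked in the proof of Theorem~\ref{hk-estimate}.
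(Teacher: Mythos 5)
Your proof is correct. Both you and the paper choose the same test value $\xi = f(z+r)$ and both exploit the monotonicity of $g_0$ to land on the constant $4r^2$. The difference is in how the resulting weighted Hardy inequality is established: the paper invokes the characterization from Opic--Kufner \cite[Thm.\ 6.2]{OK} as a black box (with the sup of $\bigl(\int_a^s g_0\bigr)\bigl(\int_s^b g_0^{-1}\bigr)$ bounded by $r^2$ via monotonicity), while you derive the needed estimate directly from the fundamental theorem of calculus, a weighted Cauchy--Schwarz split $f' = (f'\sqrt{g_0})\cdot g_0^{-1/2}$, the pointwise bound $\int_s^b g_0^{-1}\,du \le (b-s)/g_0(s)$, and Fubini. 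Your route is more elementary and self-contained; the paper's is shorter given the reference. One small cosmetic point: $\int_a^u (b-s)\,ds = (u-a)(2b-a-u)/2$ exactly, and you replace it by the cruder upper bound $(u-a)(b-a)$, which is fine for the conclusion $(b-a)^2 \le 4r^2$ but is not an equality as the phrase ``collapses the double integral to'' might suggest.
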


Of course, the infimum on the left side is attained at $\xi=V(z,r)^{-1}\int_{B(z,r)} f g_0 \,ds$. We emphasize that assumption \eqref{xdouble} is not needed in this lemma.

\begin{proof}
The key ingredient in the proof is the inequality
$$
\int_a^b |w|^2 |u|^2 \,ds \leq 4 \sup_{a\leq s\leq b}\left( \int_a^s |w|^2 \,dt \int_s^b \frac{dt}{|v|^2} \right) \int_a^b |v|^2 |u'|^2 \,ds
$$
for all $u$ with $u(b)=0$, which is an easy consequence of \cite[Thm.6.2]{OK}. Given a function $f\in H^1(B(z,r))$ we apply this bound to $u(s)=f(s)-f(z+r)$ and learn that
$$
\inf_{\xi\in\R}\, \int_{B(z,r)}  |f-\xi|^2\, g_0 \, ds 
\leq \int_{B(z,r)}  |u|^2\, g_0 \, ds \leq 4C \int_{B(z,r)} |u'|^2 g_0\,ds = 4C \int_{B(z,r)} |f'|^2 g_0\,ds
$$
with constant
$$
C= \sup_{(z-r)_+ \leq s\leq z+r}\left( \int_{(z-r)_+}^s g_0(t) \,dt \int_s^{z+r} \frac{dt}{g_0(t)} \right) \,.
$$
The monotonicity of $g_0$ implies that
$$
C \leq \sup_{(z-r)_+ \leq s\leq z+r}\left( \left(s-(z-r)_+ \right) g_0(s) \left(z+r-s\right) g_0(s)^{-1} \right) \leq r^2 \,,
$$
which proves the claimed bound.
\end{proof}

\begin{proposition}[One-dimensional heat kernel bound I] \label{basic-estim}
Under condition \eqref{xdouble} the heat kernel $k_0(r,s,t)$ of $A_0$ satisfies 
\begin{equation} \label{k0-bothsided}
\frac{c^{-1}}{\sqrt{t}\, \, g_0(r+\sqrt{t})}\, \, \leq\,  k_0(r,r,t)
\, \, \leq \, \frac{c}{\sqrt{t}\, \, g_0(r+\sqrt{t})}
\end{equation}
for some $c>0$, all $r>0$ and all $t>0$.
\end{proposition}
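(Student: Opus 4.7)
The plan is to apply the Grigor'yan--Saloff-Coste equivalence theorem (see the surveys \cite{grig,sc}) to the weighted manifold $M(g_0)=([0,\infty),g_0\,dr)$, whose associated Neumann Laplacian is precisely the operator $A_0$. This theorem asserts that the conjunction of volume doubling and a uniform family of $L^2$ Poincar\'e inequalities on balls is equivalent to two-sided Li--Yau type Gaussian heat kernel bounds, and in particular yields the diagonal estimate
$$
\frac{c^{-1}}{V(r,\sqrt t)} \,\leq\, k_0(r,r,t) \,\leq\, \frac{c}{V(r,\sqrt t)} \qquad (r\geq 0,\ t>0).
$$
The two required hypotheses are exactly the content of Lemmas \ref{vd} and \ref{pi}.

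The proposition thus reduces to the volume comparison $V(r,\sqrt t) \simeq \sqrt t\, g_0(r+\sqrt t)$. Since $g_0$ is non-decreasing, one has immediately
$$
V(r,\sqrt t) \,=\, \int_{(r-\sqrt t)_+}^{r+\sqrt t} g_0(s)\,ds \,\leq\, 2\sqrt t\, g_0(r+\sqrt t).
$$
For the matching lower bound, I first use Lemma \ref{vd} to pass to a ball twice as large, $V(r,\sqrt t) \geq (2C_0)^{-1} V(r,2\sqrt t)$, and then restrict the integral defining $V(r,2\sqrt t)$ to the subinterval $[r+\sqrt t, r+2\sqrt t]$, on which $g_0(s)\geq g_0(r+\sqrt t)$ by monotonicity. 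This produces $V(r,2\sqrt t)\geq \sqrt t\, g_0(r+\sqrt t)$, and hence $V(r,\sqrt t)\geq (2C_0)^{-1}\sqrt t\, g_0(r+\sqrt t)$, as required.

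The main technical obstacle is verifying that the Grigor'yan--Saloff-Coste machinery really applies to $M(g_0)$: this is a one-dimensional manifold \emph{with boundary} at $r=0$, and the weight $g_0$ is only piecewise constant, with jumps at the vertex radii $r_l$. Both features are routinely accommodated by the metric measure space formulations of the equivalence theorem referenced in \cite{grig,sc} --- the Neumann boundary condition at $r=0$ is the natural one dictated by the variational definition \eqref{q-form} of $A_0$, while the doubling and Poincar\'e inequalities are robust under bounded measurable weights. Once the equivalence theorem is invoked, the remaining step is the short volume calculation above.
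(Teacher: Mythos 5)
Your proof follows the paper's argument essentially step for step: invoke the Grigor'yan--Saloff-Coste equivalence via Lemmas \ref{vd} and \ref{pi} to obtain $k_0(r,r,t)\simeq V(r,\sqrt t)^{-1}$, then compare $V(r,\sqrt t)$ with $\sqrt t\,g_0(r+\sqrt t)$. The only variation is in the volume lower bound, where you apply Lemma \ref{vd} directly ($V(r,\sqrt t)\ge(2C_0)^{-1}V(r,2\sqrt t)$) and then restrict to $[r+\sqrt t,\,r+2\sqrt t]$, whereas the paper restricts first to $[r+\sqrt t/2,\,r+\sqrt t]$ and uses the doubling of $g_0$ followed by a change of variables; the two give the same constant and your route is, if anything, marginally more transparent. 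Your remark on the boundary at $r=0$ and the piecewise-constant weight is a fair point the paper does not comment on, but it does not alter the substance of the argument.
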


\begin{proof}
By Lemmas \ref{vd} and \ref{pi} the weighted manifold $M(g_0)$ satisfies the volume doubling condition and the Poincar\'e inequality. The Grigor'yan--Saloff-Coste theorem (see, e.g., \cite[Thms. 6.1 and 6.2]{grig} or \cite[Thm. 3.1]{sc}) then implies that
\begin{equation} \label{ball}
\frac{c^{-1}}{V(r,\sqrt{t})}\, \, \leq\,  k_0(r,r,t) \, \, \leq \,
\frac{c}{V(r,\sqrt{t})}\, .
\end{equation}
Clearly, we have  
$$
V(r,\sqrt{t}) \leq 2\sqrt{t}\ g_0(r+\sqrt{t}). 
$$
On the other hand, from \eqref{xdouble} it easily follows that 
\begin{align*}
V(r,\sqrt{t}) & \geq \int_{r+\sqrt{t}/2}^{r+\sqrt{t}} g_0(s)\, ds
\geq  C_0^{-1}\, \int_{r+\sqrt{t}/2}^{r+\sqrt{t}} g_0(2s)\, ds 
= (2 C_0)^{-1}\, \int_{2r+\sqrt{t}}^{2r+2\sqrt{t}} g_0(s)\, ds \\
& \geq (2C_0)^{-1}\, \sqrt{t}\, \, g_0(r+\sqrt{t}).
\end{align*}
In view of \eqref{ball} this proves the statement. 
\end{proof}

\begin{proof}[Proof of Theorem \ref{hk-estimate}]
The statement is an immediate consequence of equation \eqref{eq:heatdiag}, Theorem \ref{main-estim} and Proposition \ref{basic-estim}.
\end{proof}


\subsection{Proof of Theorem \ref{no-vd}}

We now turn to metric trees which do not necessarily satisfy the volume doubling property. Instead, we assume that $\int_0^\infty g_0(s)^{-1} \,ds$ is finite and that the quantity $S_\Gamma(\delta)$ in \eqref{eq:sgamma} is positive for some $\delta>2$. We also recall that $\tilde S_\Gamma(\delta)$ is defined in \eqref{eq:sgammatilde}. We shall deduce the 

\begin{proposition}[Nash inequality]\label{nashtrans}
Assume that $g_0$ satisfies \eqref{eq:sgamma} for some $\delta>2$. Then for all $f\in
H^1(\R_+,g_0)\cap L_1(\R_+,g_0)$ it holds 
\begin{equation}
\left( \int_0^\infty |f'|^2 g_0 \,dr \right)^{1/2}
\left(\int_0^\infty |f| g_0 \,dr\right)^{2/\delta} \geq \tilde
S_\Gamma^{1/2}(\delta) \left( \int_0^\infty |f|^2 g_0 \,dr
\right)^{(\delta+2)/2\delta} \,.
\end{equation}
\end{proposition}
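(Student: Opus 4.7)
The plan is to derive the Nash inequality in two moves: first, upgrade condition \eqref{eq:sgamma} to a weighted Sobolev inequality of Hardy--Bradley type, and then interpolate against $L_1(\R_+, g_0\,dr)$ via H\"older's inequality. This is the standard route ``Sobolev $\Longleftrightarrow$ Nash'' on a one-dimensional measure space; the delicate point is bookkeeping the precise constant $\tilde S_\Gamma(\delta)$ from \eqref{eq:sgammatilde}, which encodes the specific constant in the underlying Hardy inequality for the exponent pair $(2,\, 2\delta/(\delta-2))$.

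For the Sobolev step, set $q = 2\delta/(\delta-2)$ and note that any $f \in H^1(\R_+, g_0) \cap L_1(\R_+, g_0)$ admits a continuous representative with $f(\infty)=0$, so $|f(r)| \le \int_r^\infty |f'(s)|\,ds$. Applying the dual weighted Hardy inequality of Bradley with $h=|f'|$, weights $g_0$ on both sides, and exponents $p=2$ and $q$, converts this pointwise bound into
$$
\Big(\int_0^\infty |f|^q g_0\,dr\Big)^{1/q} \le K(\delta)\, S_\Gamma(\delta)^{-1/2}\, \Big(\int_0^\infty |f'|^2 g_0\,dr\Big)^{1/2},
$$
since the Muckenhoupt--Bradley functional for this pair of weights is exactly $S_\Gamma(\delta)^{-1/2}$ by \eqref{eq:sgamma}. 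In the off-diagonal regime $p<q$ the explicit constant $K(\delta)$ can be obtained either from the sharp-constant theorems for weighted Hardy inequalities of Manakov / Persson--Stepanov / Opic--Kufner type, or directly by optimizing over a one-parameter family of trial functions on the power-weight model $g_0(r) = r^{\delta-1}$; the result is
$$
K(\delta)^2 = \Big(\frac{(2(\delta-1))^{2(\delta-1)}}{(\delta-2)^{\delta-2}\,\delta^{\delta}}\Big)^{1/\delta}.
$$

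For the interpolation step, H\"older's inequality gives $\|f\|_{L_2(g_0)} \le \|f\|_{L_1(g_0)}^\theta\, \|f\|_{L_q(g_0)}^{1-\theta}$ with $\theta = 2/(\delta+2)$, chosen so that $1/2 = \theta + (1-\theta)/q$. Substituting the Sobolev bound and raising to the power $(\delta+2)/\delta$ yields
$$
\|f\|_{L_2(g_0)}^{(\delta+2)/\delta} \le K(\delta)\, S_\Gamma(\delta)^{-1/2}\, \|f\|_{L_1(g_0)}^{2/\delta}\, \|f'\|_{L_2(g_0)},
$$
which, after rearrangement, is exactly the claimed inequality once one recognizes that $\tilde S_\Gamma(\delta)^{1/2} = K(\delta)^{-1}\, S_\Gamma(\delta)^{1/2}$, a direct check against the definition \eqref{eq:sgammatilde}.

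The main obstacle is the Sobolev step, and specifically the identification of the exact constant in Bradley's weighted Hardy inequality in the off-diagonal case $p<q$. For $p=q$ the analogous constant is the classical Hardy constant $p/(p-1)$ and is elementary, but for $p<q$ it requires either a nontrivial appeal to Manakov-type sharp-constant results or an explicit Euler--Lagrange computation on the power-weight half-line. Once $K(\delta)$ is identified, the remaining H\"older interpolation and algebraic rearrangement are routine one-dimensional calculus.
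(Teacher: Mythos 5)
Your proof is correct and follows essentially the same two-step route as the paper: the one-dimensional weighted Hardy--Sobolev inequality of Opic--Kufner \cite[Thm.6.2]{OK} with $p=2$, $q=2\delta/(\delta-2)$ gives the $L_q$ Sobolev bound with constant $\tilde S_\Gamma(\delta)^{-1/2}$, and H\"older interpolation (your $\theta=2/(\delta+2)$ on norms is the same as the paper's $\theta=4/(\delta+2)$ on the integral $\int|f|^2$) converts it into the Nash inequality. The constant $K(\delta)$ you identify is precisely the Opic--Kufner multiplicative factor $\bigl(1+q/p'\bigr)^{1/q}\bigl(1+p'/q\bigr)^{1/p'}$ for this exponent pair, so the match with \eqref{eq:sgammatilde} is genuine and not a coincidence.
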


\begin{proof}
By the one-dimensional Hardy--Sobolev inequality \cite[Thm.6.2]{OK},
$$
\int_0^\infty |f'|^2 g_0 \,dr \geq \tilde S_\Gamma(\delta) \left(
\int_0^\infty |f|^q g_0 \,dr \right)^{2/q}
$$
where $q=2\delta/(\delta-2)$. By H\"older's inequality, we have for any $0<\theta< 1$,
$$
\int_0^\infty |f|^2 g_0 \,dr \leq \left(\int_0^\infty |f| g_0
\,dr\right)^\theta \left(\int_0^\infty |f|^{(2-\theta)/(1-\theta)}
g_0 \,dr\right)^{1-\theta} \,.
$$
Choosing $\theta=(q-2)/(q-1)=4/(\delta+2)$ we obtain the assertion.
\end{proof}

\begin{proposition}[One-dimensional heat kernel bound II] \label{A0}
Under condition \eqref{eq:sgamma} for some $\delta>2$ the heat kernel $k_0(r,s,t)$ of $A_0$ satisfies 
$$
\sup_{r>0}\, k_0(r,r,t)
\leq \Big(\frac \delta{2\ \tilde S_\Gamma(\delta)}\Big)^{\delta/2} t^{-\delta/2}\qquad t>0.
$$
\end{proposition}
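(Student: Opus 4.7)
The plan is to deduce Proposition \ref{A0} from the Nash inequality in Proposition \ref{nashtrans} by the standard Nash semigroup argument (Nash's original moment method), followed by duality and the semigroup property to upgrade an $L_1 \to L_2$ bound to an $L_1 \to L_\infty$ bound.

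\smallskip

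First, fix $f \in L_1(\R_+, g_0\,dr) \cap L_2(\R_+, g_0\,dr)$ with $\|f\|_{L_1(g_0)} = 1$ and set $u(t) := e^{-tA_0}f$. Since $e^{-tA_0}$ is a positivity preserving contraction on every $L_p(\R_+, g_0\,dr)$ (which follows from the Beurling--Deny theory, as used in Section \ref{sect:identity}), we have $\|u(t)\|_{L_1(g_0)} \leq 1$ for all $t > 0$. By the spectral theorem,
\begin{equation*}
\frac{d}{dt} \|u(t)\|_{L_2(g_0)}^2 = -2\, \langle A_0 u(t), u(t) \rangle = -2 \int_0^\infty |u'(t)|^2\, g_0\, dr.
\end{equation*}
Squaring the Nash inequality of Proposition \ref{nashtrans} applied to $u(t)$ and using $\|u(t)\|_{L_1(g_0)} \le 1$ yields
\begin{equation*}
\int_0^\infty |u'(t)|^2\, g_0\, dr \, \geq\, \tilde S_\Gamma(\delta)\, \|u(t)\|_{L_2(g_0)}^{2(\delta+2)/\delta}.
\end{equation*}
Writing $y(t) := \|u(t)\|_{L_2(g_0)}^2$, this gives the autonomous differential inequality $y'(t) \leq -2\tilde S_\Gamma(\delta)\, y(t)^{(\delta+2)/\delta}$.

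\smallskip

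Second, separating variables and integrating from $0$ to $t$ (the initial value $y(0)^{-2/\delta}$ is dropped since it is non-negative) gives
\begin{equation*}
y(t) \,\leq\, \Big(\frac{\delta}{4\, \tilde S_\Gamma(\delta)\, t}\Big)^{\delta/2},
\end{equation*}
i.e.\ $\|e^{-tA_0}\|_{L_1(g_0) \to L_2(g_0)}^2 \leq \bigl(\delta/(4 \tilde S_\Gamma(\delta) t)\bigr)^{\delta/2}$. By duality (self-adjointness of $A_0$) the same bound holds for $\|e^{-tA_0}\|_{L_2(g_0) \to L_\infty}$, and by the semigroup property $e^{-tA_0} = e^{-(t/2)A_0} e^{-(t/2)A_0}$,
\begin{equation*}
\|e^{-tA_0}\|_{L_1(g_0) \to L_\infty} \,\leq\, \|e^{-(t/2)A_0}\|_{L_2(g_0) \to L_\infty}\, \|e^{-(t/2)A_0}\|_{L_1(g_0) \to L_2(g_0)} \,\leq\, \Big(\frac{\delta}{2\, \tilde S_\Gamma(\delta)\, t}\Big)^{\delta/2}.
\end{equation*}
Since $\sup_{r,s} k_0(r,s,t) = \|e^{-tA_0}\|_{L_1(g_0) \to L_\infty}$ and the diagonal is controlled by this supremum (indeed Cauchy--Schwarz gives $k_0(r,r,t) \leq \|e^{-tA_0}\|_{L_1(g_0) \to L_\infty}$), this yields the claimed bound on $\sup_{r>0} k_0(r,r,t)$.

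\smallskip

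The argument is essentially mechanical once the Nash inequality is in hand; the only thing one has to be a bit careful about is tracking the constant $\delta/(2\tilde S_\Gamma(\delta))$ through the ODE integration and the $t \mapsto t/2$ step in the semigroup splitting, which accounts for the factor of $2$ (rather than $4$) in the final denominator. The main conceptual obstacle has already been overcome in Proposition \ref{nashtrans}, where the Hardy--Sobolev inequality $\int |f'|^2 g_0\,dr \geq \tilde S_\Gamma(\delta) \|f\|_{L_q(g_0)}^2$ with $q = 2\delta/(\delta-2)$ is converted into Nash form via H\"older interpolation; this is where the precise shape of the constant $\tilde S_\Gamma(\delta)$ and the hypothesis $\delta > 2$ enter decisively.
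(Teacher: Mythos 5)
Your proof is correct and is precisely the "classical Nash argument" that the paper invokes by citing Lieb--Loss, Theorem 8.16: you have simply written out the $L_1$-contraction, the differential inequality $y' \le -2\tilde S_\Gamma(\delta) y^{(\delta+2)/\delta}$, the integration, and the duality/semigroup upgrade from $L_1\to L_2$ to $L_1\to L_\infty$ that that citation encapsulates, with the constants tracked correctly. (Minor nitpick: you don't need Cauchy--Schwarz to pass from $\sup_{r,s}k_0(r,s,t)$ to $\sup_r k_0(r,r,t)$ in that direction — the diagonal is just a special case of the supremum; Cauchy--Schwarz would give the reverse inequality.)
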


\begin{proof}
This is a consequence of Proposition \ref{nashtrans} and the classical Nash argument (see, e.g., \cite[Thm.
8.16]{LL}). The fact that $\exp(-tA_0)$ is positivity preserving and sub-Markovian follows from the Beurling--Deny theorem \cite[Sec. XIII.12]{ReSi4}.
\end{proof}

\begin{remark}
Conversely, if there exists an $S>0$ and a $\delta>2$ such that
$\exp(-tA_0)(r,r) \leq (S t)^{-\delta/2}$, then
$$
\sup_{r>0} \left( \int_0^r g_0(s)\,ds \right)^{(\delta-2)/\delta} \left(
\int_r^\infty \frac{ds}{g_0(s)} \right) \leq C_\delta\,  S^{-1}
$$
for some constant $C_\delta$ depending only on $\delta$ (and not on $g_0$).
Indeed, the heat kernel estimate implies a Nash inequality with
equivalent constants \cite[Thm. 8.16]{LL}, the Nash inequality
implies a Sobolev inequality with equivalent constants \cite{BCLS}
and the Sobolev inequality implies the above condition by the
characterization of the one-dimensional Hardy--Sobolev inequality.
\end{remark}

\begin{proof}[Proof of Theorem \ref{no-vd}]
The statement follows from Theorem \ref{main-estim} and Proposition \ref{A0}. 
\end{proof}


\subsection{Proof of Theorem \ref{infinite-dim}}

Finally, we discuss heat kernel bounds of homogeneous trees. In order to emphasize the dependence on the branching number $b\geq 2$ we write $A_b:=A_0$ for the non-negative operator in $L_2(\R_+,g_b)$ corresponding to the quadratic form
$$
\int_0^\infty |f'|^2 g_{b}\, dr , \qquad f\in H^1(\R_+, g_{b}).
$$
Moreover, let $\omega_b$ be the function on $\R_+$ satisfying
\begin{equation*}
-(g_b \, \omega_b')' = \lambda_b\, g_b\, \omega_b
\quad \mathrm{in}\ \R_+\setminus\N \,,
\end{equation*}
\begin{equation*}
\omega_b'(0)=0, \quad \omega_b(j+)= \omega_b(j-),\quad
\omega_b'(j-)=b\, \omega_b'(j+), \quad j\in\N\, .
\end{equation*}
We know from \cite[Lemma 4.2]{efk} that there is a positive constant $c_b $ such that
\begin{equation} \label{two-sided}
c_b^{-1}\, \frac{1+r}{\sqrt{g_{b}(r)}} \,  \leq \,  \omega_b(r)\,  \leq \, c_b \ 
\frac{1+r}{\sqrt{g_{b}(r)}}.
\end{equation}
Hence
$$
S_b^{-1} := \sup_{r>0} \left( \int_0^r \omega_b(s)^2 g_{b}(s)\,ds
\right)^{1/3} \left( \int_r^\infty \frac{ds}{\omega_b(s)^2 g_{b}(s)}
\right) <\infty \,.
$$
We write
$$
\tilde S_b := (3/ 4)^{4/3}\, S_b.
$$

\begin{proposition}[One-dimensional heat kernel bound III] \label{homog}
For every $b\geq 2$ the heat kernel $k_b(r,s,t)$ of $A_b$ satisfies
$$
k_b(r,r,t)  \, \leq \, \left(\frac 3{2\ \tilde S_b}\right)^{3/2}\, 
e^{-t\lambda_b}\,  t^{-3/2} \,  \omega^2_b(r).
$$
\end{proposition}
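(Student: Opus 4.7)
The plan is to perform a ground state transformation using $\omega_b$ to reduce the bound to a Nash-type inequality in the spirit of Propositions \ref{nashtrans} and \ref{A0}, but with the weight $g_b$ replaced by the weight $\omega_b^2 g_b$.

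First, I would introduce the unitary map $U\colon L_2(\R_+,\omega_b^2 g_b\,dr) \to L_2(\R_+,g_b\,dr)$ defined by $(Uu)(r) = \omega_b(r) u(r)$. For $f=Uu$ in the form domain of $A_b$ (that is, $f\in H^1(\R_+,g_b)$ satisfying Kirchhoff's matching $f'(j-)=b\, f'(j+)$ implicitly encoded in the weight), an integration by parts combined with the ODE $-(g_b \omega_b')' = \lambda_b g_b \omega_b$ on $\R_+\setminus\N$ gives
\begin{equation*}
\int_0^\infty |f'|^2 g_b\, dr = \lambda_b \int_0^\infty |u|^2 \omega_b^2 g_b\, dr + \int_0^\infty |u'|^2 \omega_b^2 g_b\, dr.
\end{equation*}
The only delicate point is that the integration by parts produces boundary terms at each integer $j\in\N$ of the form $[g_b \omega_b \omega_b' u^2]_{j^-}^{j^+}$; these cancel since $\omega_b$ and $u$ are continuous at $j$ and the jump conditions on $\omega_b$ match exactly the jump condition $g_b(j-)\omega_b'(j-)=g_b(j+)\omega_b'(j+)$ dictated by $-(g_b\omega_b')'\in L_{\loc}^1$. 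Hence $U^* A_b U = \lambda_b + \tilde A_b$, where $\tilde A_b$ is the self-adjoint operator in $L_2(\R_+,\omega_b^2 g_b\,dr)$ generated by the form $\int_0^\infty |u'|^2 \omega_b^2 g_b\,dr$.

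Second, writing $\tilde k_b(r,s,t)$ for the heat kernel of $\tilde A_b$ with respect to the measure $\omega_b^2 g_b\,dr$, the identity $e^{-tA_b} = e^{-t\lambda_b}\, U e^{-t\tilde A_b} U^{-1}$ translates into the pointwise relation
\begin{equation*}
k_b(r,s,t) = e^{-t\lambda_b}\, \omega_b(r)\,\omega_b(s)\, \tilde k_b(r,s,t),
\end{equation*}
so on the diagonal $k_b(r,r,t) = e^{-t\lambda_b}\,\omega_b^2(r)\,\tilde k_b(r,r,t)$. It now suffices to prove $\tilde k_b(r,r,t) \leq \bigl(3/(2\tilde S_b)\bigr)^{3/2} t^{-3/2}$ for all $r,t>0$.

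Third, I would establish the Nash inequality
\begin{equation*}
\Bigl( \int_0^\infty |u'|^2 \omega_b^2 g_b\, dr \Bigr)^{1/2} \Bigl( \int_0^\infty |u| \omega_b^2 g_b\, dr \Bigr)^{2/3} \geq \tilde S_b^{1/2} \Bigl( \int_0^\infty |u|^2 \omega_b^2 g_b\, dr \Bigr)^{5/6}
\end{equation*}
by repeating the argument of Proposition \ref{nashtrans} verbatim with $\delta=3$ and with the weight $g_0$ replaced by $\omega_b^2 g_b$: the one-dimensional Hardy--Sobolev inequality \cite[Thm.~6.2]{OK} applied to this weight yields a Sobolev embedding into $L^6$ with sharp constant $\tilde S_b$ (this is precisely where the definition of $S_b$, and the numerical factor $(3/4)^{4/3}$, enter), and H\"older's inequality then produces the Nash inequality. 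Finally, the classical Nash argument \cite[Thm.~8.16]{LL} — which applies because $\exp(-t\tilde A_b)$ is positivity preserving and $L^1\to L^1$ contractive by the Beurling--Deny criterion — converts this Nash inequality into the heat kernel bound $\tilde k_b(r,r,t) \leq (3/(2\tilde S_b))^{3/2} t^{-3/2}$, and combining with the identity above completes the proof.

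The only step that requires genuine care is the ground state transformation: one must check that the jump conditions on $\omega_b$ at integer vertices are precisely those needed for the form identity to hold without spurious boundary contributions, and that the domain of $\tilde A_b$ inherits the correct matching conditions. Everything else is a direct adaptation of the $\delta=3$ case of the arguments already carried out in Section \ref{sect:proofs}.
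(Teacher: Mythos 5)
Your proposal is correct and follows essentially the same route as the paper's own proof: a ground state transformation via $\omega_b$ reducing $A_b$ to the weighted operator $B_b$ on $L_2(\R_+,\omega_b^2 g_b\,dr)$, yielding $k_b(r,s,t)=e^{-t\lambda_b}\omega_b(r)\omega_b(s)\exp(-tB_b)(r,s)$, followed by a Nash inequality for the weight $\omega_b^2 g_b$ (obtained from the one-dimensional Hardy--Sobolev inequality with $\delta=3$ plus H\"older) and the classical Nash argument. Your additional remark about the jump conditions cancelling the boundary terms at integer vertices is a detail the paper leaves implicit, but it does not change the argument.
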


The strategy of the following proof is similar in spirit to an argument in \cite{da97}.

\begin{proof}
We consider the unitary operator $U: L_2(\R_+,g_b)\to
L_2(\R_+,\omega_b^2 g_b)$ of multiplication by $\omega_b^{-1}$. The
ground state representation
$$
\int_0^\infty |f'|^2\, g_b \,dr - \lambda_b \int_0^\infty |f|^2 \,
g_b \,dr = \int_0^\infty |h'|^2\, \omega_b^2 \, g_b \,dr
$$
with $f=\omega_b h = U^* h$ implies that $A_b-\lambda_b = U^* B_b
U$, where $B_b$ is the non-negative operator in $L_2(\R_+,\omega_b^2
g_b)$ corresponding to the quadratic form
$$
\int_0^\infty |h'|^2 \, \omega_b^2\, g_{b} \, dr
$$
with form domain $H^1(\R_+,\omega_b^2 g_b)$. This implies that
$\exp(-tA_b) = e^{-t\lambda_b} U^* \exp(-tB_b) U$ which, in terms of
the integral kernels, reads
$$
\exp(-tA_b)(r,s) = e^{-t\lambda_b} \omega_b(r) \exp(-tB_b)(r,s)
\omega_b(s) \,.
$$
Hence we need to prove that
\begin{equation} \label{needed}
\exp(-tB_b)(r,s) \leq \left(\frac 3{2\ S_b}\right)^{3/2} t^{-3/2}
\,.
\end{equation}
From \eqref{two-sided}, the definition of $\tilde S_b$, and the
one-dimensional Hardy--Sobolev inequality, see \cite[Thm.6.2]{OK}, we obtain
$$
\int_0^\infty |f'|^2\, \omega_b^2\, g_{b} \,dr \geq \tilde S_\Gamma
\left( \int_0^\infty |f|^6 \omega_b^2\, g_{b} \,dr \right)^{1/3}
$$
By H\"older,
$$
\int_0^\infty |f|^2 \omega_b^2\, g_{b}  \,dr \leq
\left(\int_0^\infty |f| \omega_b^2\, g_{b} \,dr \right)^{4/5}
\left(\int_0^\infty |f|^6 \omega_b^2\, g_{b}
\, dr\right)^{1/5} \,.
$$
Hence
\begin{equation}
\left( \int_0^\infty |f'|^2 \omega_b^2\, g_{b} \,dr \right)^{1/2}
\left(\int_0^\infty |f| \omega_b^2\, g_{b} \,dr\right)^{2/3} \geq
\tilde S_b^{1/2} \left( \int_0^\infty |f|^2 \omega_b^2\, g_{b} \,dr
\right)^{5/6} \,.
\end{equation}
Estimate \eqref{needed} then follows again from Nash's argument (see, e.g., \cite[Thm. 8.16]{LL}).
\end{proof}


\begin{proof}[Proof of Theorem \ref{infinite-dim}]
The statement follows from Theorem \ref{main-estim}, Proposition \ref{homog} and equation \eqref{two-sided}. 
\end{proof}

\section{Applications: spectral estimates for Schr\"odinger operators}
\label{sect:appl}

Consider a Schr\"odinger operator $-\Delta^N_\Gamma -V$ in $L_2(\Gamma)$
with (minus) an electric potential $V:\Gamma\to [0,\infty)$ decaying at infinity. One of the classical problems of spectral theory is to estimate moments of negative eigenvalues of $-\Delta^N_\Gamma -V$ in terms of an $L_p-$norm of $V$.  Estimates of the form 
\begin{equation} \label{eq:efk}
\tr\left(-\Delta^N_\Gamma -V\right)_-^\gamma \, \leq \,  C \int_{\Gamma} V(x)^{\gamma+\frac{a+1}{2}}\,
g_0(|x|)^{\frac{a}{d-1}}\, dx, \qquad \gamma\geq 0
\end{equation}
were proved in \cite{efk} under the assumption that $V(x)=V(|x|)$. The allowed values of $\gamma$ and $a$ in \eqref{eq:efk} are determined by the global dimension of $\Gamma$, and the constant $C$ depends on $a$ and $\gamma$ but not on $V$.  For $\gamma=0$ the quantity 
$$
\tr\left(-\Delta^N_\Gamma -V\right)_-^0 = N(-\Delta^N_\Gamma -V)
$$ 
coincides with the number of negative eigenvalues of $-\Delta^N_\Gamma -V$ (counted with their multiplicities).  The heat kernel estimates proven in the previous section allow us to extend some of the results obtained in \cite{efk} also to non-symmetric potentials.  Our approach is based on a well-known inequality due to Lieb \cite{lieb} which, in combination with the identity
\begin{equation}
\label{eq:riesz}
\tr\left(-\Delta^N_\Gamma -V\right)_-^\gamma = \gamma\, \int_0^\infty\,
\tau^{\gamma-1}\,
N\left(-\Delta^N_\Gamma -V+\tau\right)\, d\tau \,,
\end{equation}
yields
\begin{equation} \label{lieb}
\tr\left(-\Delta^N_\Gamma -V\right)_-^\gamma \, \leq \,
M_{\beta,\gamma}\, \int_\Gamma\, \int_{0}^\infty  k(x,x,t)\,
t^{-1-\gamma}\, (t\, V(x)-\beta)_+\, dt\, dx,
\end{equation}
where $\beta>0$ is arbitrary and
\begin{equation} \label{constant}
M_{\beta,\gamma}= \Gamma(\gamma+1)\, \left(
e^{-\beta}-\beta\, \int_\beta^\infty\, s^{-1}\, e^{-s}\,
ds\right)^{-1}.
\end{equation}
This inequality reduces bounds on $\tr\left(-\Delta^N_\Gamma -V\right)_-^\gamma$ to bounds on the heat kernel $k(x,x,t)$.

From \cite{efk} we recall the bound
\begin{equation}\label{eq:halflarge}
\tr\left(-\Delta^N_\Gamma -V\right)_-^{1/2} \, \leq \,
\int_{\Gamma} V(x) \, dx
\end{equation}
with sharp constant. By the Aizenman--Lieb argument, this also implies that
\begin{equation*}
\tr\left(-\Delta^N_\Gamma -V\right)_-^\gamma \, \leq \, 4\, \frac{\Gamma(\gamma+1)}{(4\pi)^{1/2} \Gamma(\gamma+3/2)} \int_{\Gamma} V(x)^{\gamma+1/2} \, dx
\end{equation*}
for $\gamma\geq 1/2$. When $\Gamma$ has global dimension one, these bounds are essentially (possibly up to the factor $4$ in front of the quotient) best possible. In the following we shall assume that $\Gamma$ has global dimension larger than one.


\subsection{Two-term estimates}
As a first application of our heat kernel bounds we state a two-term upper bound on $\tr\left(-\Delta^N_\Gamma -V\right)_-^\gamma$ for symmetric trees with finite global dimension $d>1$. Given a potential $V$ and a parameter $\beta>0$, we partition $\Gamma$ as follows,
\begin{equation} \label{partition}
\Gamma_\beta^- = \big\{x\in \Gamma\, : \, V(x)\, g_0(|x|)^{\frac{2}{d-1}} < \beta \big\}, \qquad \Gamma_\beta^+ = \Gamma\setminus\Gamma_\beta^- \,.
\end{equation}

\begin{theorem} \label{two-term1}
Let $\Gamma$ be a symmetric tree with global dimension $d> 1$ and let $\beta>0$.
\begin{enumerate}
\item Assume that $\gamma > 1/2$. Then
\begin{equation} \label{gamma>1/2}
\tr\left(-\Delta^N_\Gamma -V\right)_-^\gamma \, \leq \, L_d(\beta,\gamma)
\int_{\Gamma_\beta^-} V(x)^{\gamma+\frac{d}{2}}\, g_0(|x|)\, dx +
\tilde L_d(\beta, \gamma) \int_{\Gamma_\beta^+} V(x)^{\gamma+\frac{1}{2}}\, dx.
\end{equation}

\item Assume that $1-d/2<\gamma\leq 1/2$ if $1<d\leq 2$ or that $0\leq\gamma\leq 1/2$ if $d>2$. Then
\begin{equation} \label{gamma<1/2}
\tr\left(-\Delta^N_\Gamma -V\right)_-^\gamma \, \leq \, L_d(\beta,\gamma)
\int_{\Gamma_\beta^-} V(x)^{\gamma+\frac{d}{2}}\, g_0(|x|)\, dx +
\tilde L_d(\beta, \gamma) \int_{\Gamma_\beta^+} V(x) \, g_0(|x|)^{\frac{1-2\gamma}{d-1}}\, dx.
\end{equation}
\end{enumerate}

\noindent The constants in the above estimates are given by 
\begin{align*}
L_d(\beta, \gamma) & =\frac{C\, M_{\beta,\gamma}\, \beta^{1-\frac{d}{2}-\gamma}}{(\gamma+\frac d2-1)(\gamma+\frac d2)} \quad \text{if \ } \gamma\neq \frac 12, \qquad L_d(\beta, 1/2) = 2^{\frac{d+5}{2}}\, M_{\beta,\frac 12}\, \frac{C\, \beta^{\frac{1-d}{2}}}{d^2-1}   \\
& \\
\tilde L_d(\beta, \gamma) & =  M_{\beta,\gamma}\, \beta^{\frac{1}{2}-\gamma}  \Big( \frac{\pi^{-1/2}}{ |\gamma-\frac 12|}  +\frac{C}{\gamma+\frac d2-1}\Big) \quad \text{if \ } \gamma\neq \frac 12, \qquad \tilde L_d(\beta, 1/2) = 2, 
\end{align*}
where $C$ is the constant from Corollary \ref{finite-dim}.
\end{theorem}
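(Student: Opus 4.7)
The starting point is Lieb's inequality \eqref{lieb}, which reduces the trace to a double integral of the on-diagonal heat kernel $k(x,x,t)$ against $t^{-1-\gamma}(tV(x)-\beta)_+$. We have two heat kernel bounds at our disposal: the local estimate $k(x,x,t)\leq(\pi t)^{-1/2}$ of Theorem~\ref{1-dim}, sharp as $t\to 0$, and the global estimate $k(x,x,t)\leq C t^{-d/2}g_0(|x|)$ of Corollary~\ref{finite-dim}, sharp as $t\to\infty$. Equating these two bounds identifies a crossover time $t^*(x):=g_0(|x|)^{2/(d-1)}$.

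The plan is to split the integration over $\Gamma$ according to whether $\beta/V(x)$ (the time below which the integrand in \eqref{lieb} vanishes) exceeds or falls short of this crossover time. By the definition \eqref{partition}, the condition $\beta/V(x)>t^*(x)$ is precisely $x\in\Gamma_\beta^-$, while $\beta/V(x)\leq t^*(x)$ corresponds to $x\in\Gamma_\beta^+$. On $\Gamma_\beta^-$ the entire $t$-integration range $[\beta/V,\infty)$ already lies where the global bound is sharper, so I apply $k(x,x,t)\leq Ct^{-d/2}g_0(|x|)$ throughout and the elementary computation of $\int_{\beta/V}^\infty t^{-d/2-\gamma-1}(tV-\beta)\,dt$ yields exactly the first summand in both parts of the theorem with the claimed coefficient $L_d(\beta,\gamma)$.

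On $\Gamma_\beta^+$, I split the inner $t$-integral at $t^*(x)$: on the short-time range $[\beta/V,t^*]$ I use the local bound $(\pi t)^{-1/2}$, and on $[t^*,\infty)$ the global bound. Both subintegrals are elementary power-law integrals. For $\gamma>1/2$ the short-time integral can be extended to $[\beta/V,\infty)$ at no cost, producing a multiple of $\beta^{1/2-\gamma}V(x)^{\gamma+1/2}$; the long-time integral yields a multiple of $V(x)g_0(|x|)^{(1-2\gamma)/(d-1)}$, which on $\Gamma_\beta^+$ is converted, using the defining inequality $g_0(|x|)^{2/(d-1)}\geq\beta/V(x)$, to a multiple of $\beta^{1/2-\gamma}V(x)^{\gamma+1/2}$. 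The two contributions combine to give the constant $\tilde L_d(\beta,\gamma)$ and the first form of the bound. For $1-d/2<\gamma<1/2$ the exponent $(1-2\gamma)/(d-1)$ is positive, the natural bound retains the factor $g_0(|x|)^{(1-2\gamma)/(d-1)}$, and the lower limit $\beta/V$ of the short-time integral supplies an additional factor of $\beta^{1/2-\gamma}$ to match the stated $\tilde L_d$.

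The main technical point is the endpoint $\gamma=1/2$, where the integral against $(\pi t)^{-1/2}$ becomes logarithmic and the scheme above does not recover the sharp value $\tilde L_d(\beta,1/2)=2$. There I would bypass \eqref{lieb} on $\Gamma_\beta^+$ in favor of the sharp one-term estimate \eqref{eq:halflarge} applied to $V$ restricted to $\Gamma_\beta^+$, combined with the Lieb-based bound on $\Gamma_\beta^-$ via an appropriate operator-monotonicity argument. Producing the stated numerical constants in $L_d$ and $\tilde L_d$ then reduces to careful bookkeeping of the elementary power-law integrals appearing in each of the four subregions (two spatial times two temporal).
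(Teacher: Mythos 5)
Your proposal mirrors the paper's proof: it splits the Lieb double integral into the same four subregions $I^{\sigma,\tau}$ determined by $\Gamma_\beta^\pm$ and the crossover time $t^*(x)=g_0(|x|)^{2/(d-1)}$, applies the $(\pi t)^{-1/2}$ bound for short times and the $Ct^{-d/2}g_0(|x|)$ bound for long times, and handles the logarithmic endpoint $\gamma=1/2$ by sidestepping Lieb on $\Gamma_\beta^+$ in favor of the sharp one-term estimate \eqref{eq:halflarge} together with a split-potential argument on $\Gamma_\beta^-$. The only small imprecision is calling the $\gamma=1/2$ device ``operator-monotonicity''; the paper uses the variational inequality $N(H_1+H_2)\le N(H_1)+N(H_2)$ integrated against $\tau^{\gamma-1}\,d\tau$, but the intent and the resulting decomposition $V=V_<+V_>$ are the same.
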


The two terms on the right side of inequality \eqref{gamma>1/2} reflect the behavior of $\tr\left(-\Delta^N_\Gamma -V\right)_-^\gamma$ in the weak and strong coupling regime, i.e., for $V\to 0$ and $V\to\infty$, respectively. As it was shown in \cite{efk} these limiting behaviors are determined by the local and global dimension of $\Gamma$ which are equal to $1$ and $d$, respectively. Similar two-term estimates for Schr\"odinger operators on manifolds with different local and global dimensions were discussed recently in \cite{efk,rs,MoVa} and the references therein.

\begin{proof}
We split the double integral on the right side of \eqref{lieb} into four integral $I^{\sigma,\tau}$ with $\sigma,\tau\in\{\pm\}$. In the notation $I^{\sigma,\tau}$ the first index $\sigma$ indicates that the $x$ integration is over $\Gamma_\beta^\sigma$. The second index $\tau=-$ means that the $t$-integration is restricted to $(0, g_0(|x|)^{2/(d-1)})$, and correspondingly for $\tau=+$.

In the integrals $I^{\sigma,-}$ with $\tau=-$ we use the bound $k(x,x,t)\leq (\pi t)^{-1/2}$ from Theorem \ref{1-dim}. Since $t V(x) -\beta\leq 0$ for all $x\in \Gamma_\beta^-$ and $t\in (0, g_0(|x|)^{2/(d-1)})$, we have $I^{--}=0$. Moreover,
\begin{align*}
I^{+-} 
& \leq \pi^{-\frac 12} \int_{\Gamma_\beta^+}\, \int_{0}^{g_0(|x|)^{2/(d-1)}} t^{-\frac32-\gamma}\, (t\, V(x)-\beta)_+\, dt\, dx \\
& = \pi^{-\frac 12} \beta^{-\gamma+\frac12}
\int_{\Gamma_\beta^+} V(x)^{\gamma+\frac12} \ I_1(\beta^{-1} V(x) g_0(|x|)^\frac{2}{d-1}) \,dx \,,
\end{align*}
where
$$
I_1(a) = \int_{1}^{a} s^{-\frac32-\gamma}\, (s-1) \,ds \,,
\quad a\geq 1 \,.
$$
It is elementary to estimate
$$
I_1(a) \leq
\left\{
\begin{array}{l@{\qquad}l}
 (\gamma-\tfrac12)^{-1} & \mathrm{if}\ \gamma>\frac12 \, , \\
  &  \\
 (\tfrac12-\gamma)^{-1}\ a^{\tfrac12-\gamma} &  \mathrm{if}\ 0\leq \gamma<\frac12 \, .
 \end{array}
\right.
$$
This leads to the desired bound on $I^{+-}$. (For $\gamma=1/2$ we proceed differently below.)

We proceed to $I^{\sigma,+}$ with $\tau=+$ and recall the bound $k(x,x,t)\leq C t^{-d/2} g_0(|x|)$ from Corollary \ref{finite-dim}. We obtain
\begin{align*}
I^{-+} 
& \leq C \int_{\Gamma_\beta^-}\, g_0(|x|) \int_{g_0(|x|)^{2/(d-1)}}^\infty t^{-1-\frac d2-\gamma}\, (t\, V(x)-\beta)_+\, dt\, dx \\
& = C \beta^{-\gamma-\frac d2+1}
\left( \int_{\Gamma_\beta^-} V(x)^{\gamma+\frac d2}  g_0(|x|) \,dx \right)
\left( \int_1^\infty s^{-1-\frac d2-\gamma}\, (s-1) \,ds \right) \,.
\end{align*}
The latter integral is finite since $\gamma>1-d/2$. Finally, we have
\begin{align*}
I^{++} 
& \leq C \int_{\Gamma_\beta^+}\, g_0(|x|) \int_{g_0(|x|)^{2/(d-1)}}^\infty t^{-1-\frac d2-\gamma}\, (t\, V(x)-\beta)_+\, dt\, dx \\
& = C \beta^{-\gamma+\frac 12}
\int_{\Gamma_\beta^+}\,  V(x)^{\gamma+\frac 12}
\ I_2(\beta^{-1} V(x) g_0(|x|)^\frac{2}{d-1}) \,dx \,,
\end{align*}
where
$$
I_2(a) = a^\frac{d-1}2 \int_{a}^\infty s^{-1-\frac d2-\gamma}\, (s-1)\,ds \,,
\quad a\geq 1 \,.
$$
It is elementary to estimate
$$
I_2(a) \leq
\left\{
\begin{array}{l@{\qquad}l}
 (\gamma+\tfrac d2-1)^{-1} & \mathrm{if}\ \gamma>\frac12 \, , \\
  &  \\
 (\gamma+\tfrac d2-1)^{-1}\ a^{\tfrac12-\gamma} &  \mathrm{if}\ 0\leq \gamma<\frac12 \, .
 \end{array}
\right.
$$
In view of \eqref{lieb} we have
$$
\tr(-\Delta_\Gamma^N - V)^\gamma_- \, \leq\, M_{\beta,\gamma} \big (I^{++} + I^{+-}+I^{-+} \big).
$$
We have thus proven the statement of the Theorem  for $\gamma>1/2$ and for $1-d/2<\gamma<1/2$. 

For $\gamma=1/2$ we proceed in a different way. First, we note that by substituting the bound for $k(x,x,t)$ from Corollary \ref{finite-dim} into \eqref{lieb} we obtain 
\begin{equation}\label{eq:halfsmall}
\tr\left(-\Delta^N_\Gamma -V\right)_-^{1/2} \, \leq \, 4 M_{\beta,\frac 12}\, \frac{C\, \beta^{\frac{1-d}{2}}}{d^2-1}  \int_{\Gamma} V(x)^{\frac{1+d}{2}}\, g_0(|x|)\, dx \,.
\end{equation}

Next, we recall that for two lower semi-bounded, self-adjoint operators $H_1$ and $H_2$ one has $N(H_1+H_2)\leq N(H_1)+N(H_2)$ by the variational principle. (Here again, $N(\cdot)$ denotes the number of negative eigenvalues, counting multiplicities.) Applying this with $H_j$ replaced by $H_j+\tau/2$, i.e., $N(H_1+H_2+\tau) \leq N(H_1+\tau/2)+ N(H_2+\tau/2)$, and integrating with respect to $\tau$ we find in view of \eqref{eq:riesz}
$$
\tr(H_1+H_2)_-^\gamma =\gamma \int_0^\infty N(H_1+H_2+\tau) \tau^{\gamma-1}\,d\tau
\leq 2^\gamma \left( \tr(H_1)_-^\gamma + \tr(H_2)_-^\gamma \right) \,.
$$
Now for given potential $V$ and given parameter $\beta>0$, we decompose $V=V_<+V_>$ with $V=V_<$ on $\Gamma_\beta^-$ and $V=V_>$ on $\Gamma_\beta^+$. Applying the previous construction with $H_1= -\Delta_\Gamma^N - 2V_<$ and $H_2= -\Delta_\Gamma^N - 2V_>$ we conclude that
\begin{equation}\label{eq:split}
\tr(-\Delta_\Gamma^N - V)^\gamma_- \leq \tr( -\Delta_\Gamma^N - 2V_< )_-^\gamma + \tr(-\Delta_\Gamma^N - 2V_>)_-^\gamma \,.
\end{equation}
The claimed inequality now follows by applying \eqref{eq:halfsmall} and \eqref{eq:halflarge} to the first and second term, respectively.
\end{proof}

\begin{remark}
The operator $-\Delta^N_\Gamma -V$ has no weakly coupled eigenvalues if $d>2$. Indeed, recall the Hardy type inequality 
\begin{equation} \label{eq:hardy}
-\Delta^N_\Gamma \, \geq \, \frac{C}{1+|x|^2}, \qquad d>2,
\end{equation}
see \cite{efk1}. In view of \eqref{eq:hardy} it thus follows that for $\beta$ small enough the first term on the right hand side of \eqref{gamma>1/2} and \eqref{gamma<1/2} can be left out provided we modify the constant $\tilde L_d(\beta,\gamma)$. Indeed, using the notation of the proof of Theorem \ref{two-term1} we deduce from \eqref{eq:hardy} that for sufficiently small $\beta$ 
\begin{align*}
\tr(-\Delta_\Gamma^N - V)^\gamma_- & = \tr(-\Delta_\Gamma^N - V_< -V_>)^\gamma_- \leq \tr\big(-\frac 12 \Delta_\Gamma^N  +\frac{C}{2(1+|x|^2)} - V_< -V_> \big)^\gamma_- \\
& \quad \leq \tr\big(-\frac 12 \Delta_\Gamma^N  -V_> \big)^\gamma_- = 2^{-\gamma} \, \tr(- \Delta_\Gamma^N  -2 V_> )^\gamma_- \,,
\end{align*}
where we used that $V_< < \beta g_0(|x|)^{-2/(d-1)} \leq C(2(1+|x|^2))^{-1}$ if $\beta$ is sufficiently small.
\end{remark}


\subsection{One-term estimates}

The following statements are easy consequences of Theorem~\ref{two-term1}. We state them separately in order to show that they extend some of the results of \cite{efk} to general, not necessarily symmetric, potentials $V$.

\begin{corollary} \label{lt-extended}
Let $\Gamma$ be a symmetric tree with global dimension $d$.  Assume that either $1< d\leq 2$ and $0\leq a< d-1$, or else that $d> 2$ and $0\leq a \leq 1$. Then for any $\gamma\geq \frac{1-a}{2}$ there is a constant $K(a,d,\gamma)$ such  that 
\begin{equation} \label{eq:lt-ext}
\tr\left(-\Delta^N_\Gamma -V\right)_-^\gamma \, \leq \,
K(a,d,\gamma) \int_\Gamma V(x)^{\gamma+\frac{a+1}{2}}\, g_0(|x|)^{\frac{a}{d-1}}\, dx.
\end{equation}
\end{corollary}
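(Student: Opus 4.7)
The plan is to deduce \eqref{eq:lt-ext} directly from the two-term bounds of Theorem \ref{two-term1} by dominating each integrand on its respective region by the target integrand $V^{\gamma+(a+1)/2} g_0^{a/(d-1)}$.

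First I would fix an arbitrary parameter $\beta>0$ (say $\beta=1$) and split according to the size of $\gamma$: if $\gamma\geq 1/2$ invoke \eqref{gamma>1/2}, and if $(1-a)/2\leq \gamma\leq 1/2$ invoke \eqref{gamma<1/2}. The admissibility of the second range under the hypotheses of the corollary deserves a short check: for $d>2$, the range in Theorem~\ref{two-term1}(2) is $[0,1/2]$ and contains $[(1-a)/2,1/2]$ since $a\geq 0$; for $1<d\leq 2$, the lower endpoint in Theorem~\ref{two-term1}(2) is $1-d/2$, and the constraint $a<d-1$ is precisely $(1-a)/2 > 1-d/2$, so again the full range is covered.

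Second, I would compare each of the two terms on the right sides of \eqref{gamma>1/2} or \eqref{gamma<1/2} to the target integrand using the defining threshold of $\Gamma_\beta^\pm$. On $\Gamma_\beta^-$ we have $g_0(|x|)^{2/(d-1)}\leq \beta V(x)^{-1}$, so raising to the non-negative power $(d-1-a)/2$ and inserting into the factorization $g_0 = g_0^{a/(d-1)} \cdot g_0^{(d-1-a)/(d-1)}$ yields
\begin{equation*}
V^{\gamma+d/2}\, g_0 \;\leq\; \beta^{(d-1-a)/2}\; V^{\gamma+(a+1)/2}\, g_0^{a/(d-1)}.
\end{equation*}
On $\Gamma_\beta^+$ we have $V\geq \beta g_0^{-2/(d-1)}$. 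For $\gamma\geq 1/2$ this gives $V^{-a/2}\leq \beta^{-a/2} g_0^{a/(d-1)}$ and hence
\begin{equation*}
V^{\gamma+1/2} \;\leq\; \beta^{-a/2}\; V^{\gamma+(a+1)/2}\, g_0^{a/(d-1)}.
\end{equation*}
For $\gamma\leq 1/2$, writing $V\, g_0^{(1-2\gamma)/(d-1)} = V^{\gamma+(a+1)/2}\cdot V^{(1-a)/2-\gamma}\, g_0^{(1-2\gamma)/(d-1)}$ and applying $V^{(1-a)/2-\gamma}\leq \beta^{(1-a)/2-\gamma} g_0^{(2\gamma-1+a)/(d-1)}$ (valid since $(1-a)/2-\gamma\leq 0$) gives
\begin{equation*}
V\, g_0^{(1-2\gamma)/(d-1)} \;\leq\; \beta^{(1-a)/2-\gamma}\; V^{\gamma+(a+1)/2}\, g_0^{a/(d-1)}.
\end{equation*}
Adding the resulting pointwise bounds on $\Gamma_\beta^-$ and $\Gamma_\beta^+$ and absorbing all the $\beta$-dependent prefactors together with $L_d(\beta,\gamma)$ and $\tilde L_d(\beta,\gamma)$ into a single constant $K(a,d,\gamma)$ produces \eqref{eq:lt-ext}.

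The main obstacle is purely bookkeeping: one must verify that all the exponents appearing above have the sign that makes the elementary inequalities go in the upper-bound direction under the stated restrictions on $(a,d,\gamma)$, and that the range of $\gamma$ in Theorem \ref{two-term1}(2) contains $[(1-a)/2,1/2]$ in each of the two admissible regimes of $(a,d)$. No new analytic input beyond Theorem~\ref{two-term1} is required.
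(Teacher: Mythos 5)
Your argument is correct and yields the corollary, but it takes a somewhat different route from the paper's. The paper establishes \eqref{eq:lt-ext} only at the endpoint $\gamma=(1-a)/2$, using only Theorem~\ref{two-term1}(2); at that exponent the comparison on $\Gamma_\beta^+$ is an \emph{identity} (since $1-2\gamma=a$ forces $V\,g_0^{(1-2\gamma)/(d-1)}=V^{\gamma+(a+1)/2}g_0^{a/(d-1)}$), so only the $\Gamma_\beta^-$ term requires an inequality, and the full range $\gamma>(1-a)/2$ is then obtained by the Aizenman--Lieb monotonicity argument. You instead carry out the pointwise domination directly for every admissible $\gamma\geq(1-a)/2$, handling the $\gamma>1/2$ and $\gamma\leq 1/2$ regimes via items (1) and (2) of Theorem~\ref{two-term1} respectively; this dispenses with the Aizenman--Lieb interpolation at the cost of more exponent bookkeeping, and produces a constant with a different $\beta$- and $\gamma$-dependence than the one the paper records at the critical exponent. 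The exponent manipulations you wrote out (including the admissibility check that $(1-a)/2>1-d/2$ is equivalent to $a<d-1$ when $1<d\leq 2$) are correct. One small slip: your case split should read ``$\gamma>1/2$'' for invoking \eqref{gamma>1/2}, since Theorem~\ref{two-term1}(1) assumes a strict inequality; this costs nothing because $\gamma=1/2$ already lies in the range of your second branch.
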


The only case of \cite[Thm.2.7(1)]{efk}, which is not covered by this corollary, is $a=d-1$ for $1<d<2$.

\begin{proof}
Assume first that $\gamma=(1-a)/2$. Then $1-d/2<\gamma\leq 1/2$ if $1<d\leq 2$ and $0\leq\gamma\leq 1/2$ if $d>2$ and we are in the situation of the second item of Theorem \ref{two-term1}. For any $\beta>0$ we have $V(x)^{\gamma+\frac{d}{2}} \, g_0(|x|) \leq \beta^{\frac{d-a-1}{2}} V(x)^{\gamma+\frac{a+1}{2}}\, g_0(|x|)^{\frac{a}{d-1}}\, $ if $x\in\Gamma_\beta^-$ (since $d\geq a+1$) and $V(x) \, g_0(|x|)^{\frac{1-2\gamma}{d-1}} = V(x)^{\gamma+\frac{a+1}{2}}\, g_0(|x|)^{\frac{a}{d-1}}\, $ if $x\in\Gamma_\beta^+$.  From Theorem \ref{two-term1} we thus get inequality \eqref{eq:lt-ext} with the constant
$$
K(a,d, (1-a)/2) = \inf_{\beta>0}\, \max \Big \{ \beta^{\frac{d-a-1}{2}}\, L_d(\beta, (1-a)/2)\, ,\, \tilde L_d(\beta, (1-a)/2) \Big\}.
$$
For $\gamma> (1-a)/2$ the claim follows by the Aizenman--Lieb argument.
\end{proof}


\subsection{\bf Estimates for homogeneous trees}

As we have already mentioned, the infimum of the spectrum of the Laplace operator on a homogeneous tree with a branching number $b\geq 2$ is strictly positive and given by $\lambda_b$ from \eqref{lambdab}. It is therefore natural to look for estimates on $\tr\left(-\Delta^N_\Gamma -\lambda_b-V\right)_-^\gamma$. This is the content of the following

\begin{theorem} \label{lt-homog}
Let $\Gamma_b$ be a homogeneous tree with branching number $b\geq 2$. Then for any $\gamma\geq 0$ there is a constant $L_b(\beta,\gamma)$ such that
\begin{align*}
\tr\left(-\Delta^N_{\Gamma_b} -\lambda_b-V\right)_-^\gamma \ \leq \
L_b(\beta,\gamma) \int_{\Gamma_b} V(x)^{\gamma+\frac{3}{2}}\, (1+|x|)^2 \, dx \,.
\end{align*}
\end{theorem}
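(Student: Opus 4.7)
The plan is to reduce the statement, via Lieb's inequality \eqref{lieb}, to the heat kernel bound of Theorem \ref{infinite-dim}. The key observation is that, by \eqref{lambdab}, the shifted operator $H := -\Delta^N_{\Gamma_b} - \lambda_b$ is non-negative, and its semigroup $e^{-tH} = e^{\lambda_b t}\, e^{t\Delta^N_{\Gamma_b}}$ is positivity preserving with integral kernel $e^{\lambda_b t}\, k(x,y,t)$. The derivation of \eqref{lieb} from \cite{lieb} uses only non-negativity and positivity preservation of the underlying heat semigroup, so the same bound applies verbatim with $-\Delta^N_\Gamma$ replaced by $H$ and $k(x,x,t)$ replaced by $e^{\lambda_b t}\, k(x,x,t)$:
\begin{equation*}
\tr(H-V)_-^\gamma \, \leq\, M_{\beta,\gamma} \int_{\Gamma_b} \int_0^\infty e^{\lambda_b t}\, k(x,x,t)\, t^{-1-\gamma}(tV(x)-\beta)_+ \, dt\, dx.
\end{equation*}

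The point of shifting is that the exponentially decaying factor in the bound of Theorem \ref{infinite-dim} is precisely absorbed, leaving the uniform power-law estimate $e^{\lambda_b t}\, k(x,x,t) \leq C_b\, t^{-3/2}(1+|x|)^2$. Substituting this into the preceding display gives
\begin{equation*}
\tr(H-V)_-^\gamma \, \leq\, C_b M_{\beta,\gamma} \int_{\Gamma_b} (1+|x|)^2 \int_0^\infty t^{-5/2-\gamma}(tV(x)-\beta)_+\, dt\, dx.
\end{equation*}
The inner integral is evaluated explicitly via the substitution $s = tV(x)/\beta$, yielding $V(x)^{\gamma+3/2}\beta^{-1/2-\gamma}$ multiplied by the convergent numerical integral $\int_1^\infty s^{-5/2-\gamma}(s-1)\, ds = ((1/2+\gamma)(3/2+\gamma))^{-1}$, which is finite for every $\gamma \geq 0$. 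Collecting constants produces the claimed inequality with
\begin{equation*}
L_b(\beta,\gamma) \,=\, \frac{C_b\, M_{\beta,\gamma}\, \beta^{-1/2-\gamma}}{(1/2+\gamma)(3/2+\gamma)}.
\end{equation*}

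I anticipate no substantial obstacle in executing this plan: the argument is essentially a direct application of Lieb's inequality to the non-negative operator $H$, combined with the heat kernel bound of Theorem \ref{infinite-dim} whose exponential prefactor is tailored exactly to cancel the shift. The only subtlety worth making explicit is the justification that \eqref{lieb} continues to hold for $H$ in place of $-\Delta^N_\Gamma$, which follows since Lieb's derivation relies solely on non-negativity of the operator and positivity preservation of the associated heat semigroup.
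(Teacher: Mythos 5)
Your proof is correct and takes essentially the same approach as the paper: apply the Lieb-type inequality \eqref{lieb} to the shifted, non-negative operator $-\Delta^N_{\Gamma_b}-\lambda_b$, whose heat kernel carries the factor $e^{\lambda_b t}$, and then invoke the bound of Theorem~\ref{infinite-dim} so that the exponential prefactor cancels, leaving the convergent $t$-integral. Your write-up simply supplies the details (positivity preservation of $e^{-tH}$, the explicit inner $t$-integral, and the resulting constant $L_b(\beta,\gamma)$) which the paper leaves implicit.
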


\begin{proof}
The analogue of \eqref{lieb} in this case is
$$
\tr\left(-\Delta^N_\Gamma -\lambda_b-V\right)_-^\gamma \, \leq \, \inf_{\beta>0}\, M_{\beta,\gamma} \int_\Gamma\, \int_{0}^\infty  \, k_b(x,x,t) \, e^{\lambda_b t}\, 
t^{-1-\gamma}\, (t\, V(x)-\beta)_+\, dt\, dx.
$$
The result then follows by using the estimate from Theorem \ref{infinite-dim}. 
\end{proof}

\medskip


\bibliographystyle{amsalpha}

\end{document}